\setlist{nosep}
\definecolor{darkgreen}{rgb}{0,0.5,0}
\newcommand{\QQ}{\mathbb{Q}}
\newcommand{\CC}{\mathbb{C}}
\newcommand{\PP}{\mathbb P}
\newcommand{\A}{\mathbb A}
\newcommand{\Qbar}{\overline{Q}}
\DeclareMathOperator{\Frac}{Frac}
\DeclarePairedDelimiter{\ceil}{\lceil}{\rceil}
\newtheorem{theorem}{Theorem}
\newtheorem{lemma}[theorem]{Lemma}
\newtheorem{corollary}[theorem]{Corollary}
\newtheorem{proposition}[theorem]{Proposition}
\newtheorem{definition}[theorem]{Definition}
\newtheorem{example}[theorem]{Example}
\newtheorem{remark}[theorem]{Remark}
\newtheorem{acknowledgment}[]{Acknowledgment}
\begin{document}

\begin{frontmatter}

\title{Rationalizability of square roots}

\author{Marco Besier}
\address{Institut f\"ur Mathematik, Johannes Gutenberg-Universit\"at Mainz, 55099 Mainz, Germany.\\
PRISMA Cluster of Excellence, Institut f\"ur Physik, Johannes Gutenberg-Universit\"at Mainz, 55099 Mainz, Germany.}
\ead{marcobesier@icloud.com}

\author{Dino Festi}
\address{Institut f\"ur Mathematik, Johannes Gutenberg-Universit\"at Mainz, 55099 Mainz, Germany.\\
Dipartimento di Matematica 'Federigo Enriques', Universit\`a degli studi di Milano, via Saldini 50, 20129 Milano, Italy. }
\ead{dinofesti@gmail.com}

\begin{abstract}
Feynman integral computations in theoretical high energy particle physics
frequently involve square roots in the kinematic variables. 
Physicists often want to solve Feynman integrals in terms of multiple polylogarithms.
One way to obtain a solution in terms of these functions is to rationalize all occurring square roots by a suitable variable change.  
In this paper, we give a rigorous definition of rationalizability for square roots of ratios of polynomials.
We show that the problem of deciding whether a single square root is rationalizable can be reformulated in geometrical terms.
Using this approach, 
we give easy criteria to decide rationalizability in most cases of square roots  in one and two variables.
We also give partial results and strategies to prove or disprove rationalizability of sets of square roots.
We apply the results to many examples from actual computations in high energy particle physics. 
\end{abstract}

\begin{keyword}
Feynman integrals, multiple polylogarithms, rationality problems.
\end{keyword}
\end{frontmatter}

\section{Introduction} \label{sec:Introduction}

The measurements carried out at modern particle colliders require accurate theoretical predictions.
To optimize the precision of these predictions, one has to solve Feynman integrals of increasing complexity.
Using dimensional regularization, one writes a given Feynman integral as a Laurent series.
For many Feynman integrals, each term of their Laurent expansion can be written as a linear combination of multiple polylogarithms.
A representation in terms of these functions is favorable because their analytic structure and numerical implementation (cf. \cite{Vollinga:2004sn,Bauer:2000cp}) are well-understood.
Multiple polylogarithms are iterated integrals with integration kernels like
\begin{equation*}
    \omega_j=\frac{dx}{x-z_j},
\end{equation*}
\noindent where the $z_j$ may depend on the kinematic variables but are independent of the integration variable $x$.
The kernels that appear in the computation of Feynman integrals are, however, often more complicated:
they typically involve various square roots.
For example, one encounters kernels like
\begin{equation*}
    \frac{dx}{\sqrt{(x-z_1)(x-z_2)}}.
\end{equation*}
To still find a representation in terms of multiple polylogarithms, one usually tries to proceed as follows:
\begin{enumerate}
    \item[1.] try to find a variable change that turns all square roots into rational functions;
    \item[2.] use partial fractioning to obtain the desired integration kernels.
\end{enumerate}
Changing variables to rationalize a given set of square roots has, therefore, been a crucial step in many particle physics computations, see \cite{Becchetti:2017abb,Broadhurst:1993mw,Fleischer:1998nb,Aglietti:2004tq,Gehrmann:2018yef,Henn:2013woa,Lee:2017oca,Bork:2019aud,Abreu:2019rpt,Primo:2018zby,Chaubey:2019lum,Gehrmann:2015bfy}.
Especially for Feynman integrals in massless theories with dual conformal symmetry, momentum twistor variables turned out to be an excellent variable choice, cf. \cite{Bourjaily:2018aeq}.
Applications can be found also in combinatorics, see for example \cite{Abl17, Abl19}.
An algorithmic approach to the rationalization problem was brought from mathematics to the physics community in \cite{Besier:2018jen} and recently automated with the {\tt RationalizeRoots} software (cf. \cite{Besier:2019kco}), which is available for {\tt Maple} (cf. \cite{Maple}) and {\tt Mathematica} (cf. \cite{Mathematica}).
\par\vspace{\baselineskip}
While these techniques often lead to a suitable variable change, there are still many practical examples where they do not apply.
For these cases, two questions arise:
\begin{itemize}
    \item Are the methods just not powerful enough to find a suitable variable change?
    \item Is it even possible to rationalize the given square roots? If not, can we prove it?
\end{itemize}

This paper is a first attempt to develop simple yet rigorous criteria that physicists can use to answer these questions.
After giving a rigorous definition of rationalizability (Definition~\ref{d:rationalizability}) that is compatible with the notion of ``change of variables'' used in physics,
we show that the problem can be reduced to studying square roots of (squarefree) polynomials instead of square roots of rational expressions. 
This allows us to translate the problem into an arithmetic geometrical language and give some first general partial results.
These are summarized in the following theorem.
\begin{theorem}\label{thm:Main1}
Let $k$ be any field and
let $W=\sqrt{p/q}$ be a square root of a ratio of polynomials, i.e.,  $p,q\in k[X_1,...,X_n]$ and $q$ non-zero.
Then, the following statements hold:
\begin{enumerate}
    \item There exists a squarefree polynomial $f\in k[X_1,...,X_n]$ such that $W$ is rationalizable if and only if $\sqrt{f}$ is.
    \item There exist two projective varieties over $k$ associated to $\sqrt{f}$, denoted by $\overline{V}$ (the associated hypersurface) and $\overline{S}$ (the associated double cover), such that the following statements are equivalent:
    \begin{enumerate}
        \item $\sqrt{f}$ is rationalizable;
        \item $\overline{V}$ is unirational;
        \item $\overline{S}$ is unirational.
    \end{enumerate}
    \item Assume $k$ is algebraically closed and let $d$ denote the degree of $f$.   
    If $d=1,2$, or if $\overline{V}$ has a singular point of multiplicity $d-1$, then $\sqrt{f}$ is rationalizable.
\end{enumerate}
\end{theorem}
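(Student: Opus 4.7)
I will prove the three parts in order. Part (1) is an algebraic reduction using unique factorization. Part (2) translates rationalizability into a geometric question about auxiliary projective varieties, whose equivalence (b)$\iff$(c) I expect to be the main obstacle, since $\overline{V}$ and $\overline{S}$ have different dimensions. Part (3) will then follow from (2) by producing explicit rational parametrizations of $\overline{V}$ in each of the listed cases.

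\textbf{Part (1).} Write $\sqrt{p/q}=\sqrt{pq}/q$ (the sign choice being immaterial for rationalizability). Since $k[X_{1},\dots,X_{n}]$ is a unique factorization domain, factor $pq=g^{2}f$ with $f$ squarefree and $g\in k[X_{1},\dots,X_{n}]$; then $\sqrt{p/q}=(g/q)\sqrt{f}$, and the factor $g/q$ is a rational function. Consequently a change of variables rationalizes $\sqrt{p/q}$ if and only if it rationalizes $\sqrt{f}$.

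\textbf{Part (2).} Define $\overline{V}\subset\mathbb{P}^{n}$ as the projective closure of the hypersurface $\{f=0\}\subset\mathbb{A}^{n}$, and $\overline{S}$ as a suitable (for instance weighted projective) compactification of the affine double cover $\{Y^{2}=f\}\subset\mathbb{A}^{n+1}$, equipped with the double-cover morphism $\pi\colon\overline{S}\to\mathbb{P}^{n}$ whose branch locus is $\overline{V}$. The equivalence (a)$\iff$(c) is essentially tautological: rationalizing $\sqrt{f}$ means providing rational functions $X_{i}(t_{1},\dots,t_{n})$ and $Y(t_{1},\dots,t_{n})$ satisfying $Y^{2}=f(X_{1},\dots,X_{n})$, i.e., a dominant rational map $\mathbb{A}^{n}\dashrightarrow\overline{S}$. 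The equivalence (b)$\iff$(c) is more delicate. For one direction, a unirational parametrization of $\overline{S}$ restricts to the ramification divisor of $\pi$, which is birational to $\overline{V}$, yielding a unirational parametrization of $\overline{V}$; for the converse, one lifts a parametrization of $\overline{V}$ to $\overline{S}$ via a geometric construction using pencils of lines through points of $\overline{V}$. I expect this last step to be the principal technical hurdle.

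\textbf{Part (3).} Assume $k$ algebraically closed. In each of the three cases the plan is to exhibit $\overline{V}$ as rational, hence unirational, and then invoke part (2). If $d=1$, $\overline{V}$ is a hyperplane, which is trivially rational. If $d=2$, $\overline{V}$ is a quadric in $\mathbb{P}^{n}$; over algebraically closed $k$ any such quadric is rational, either by stereographic projection from a smooth point or, when the quadric is singular, because it is a cone over a lower-dimensional quadric (inductively rational). Finally, if $\overline{V}$ has a singular point $P$ of multiplicity $d-1$, consider the projection from $P$ to any hyperplane $H\cong\mathbb{P}^{n-1}$ not containing $P$: by B\'ezout, a general line through $P$ meets $\overline{V}$ with multiplicity exactly $d-1$ at $P$ and hence in a unique further point $Q$, so the assignment $L\mapsto Q$ is a birational inverse to the projection $\overline{V}\dashrightarrow H$. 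In all three cases $\overline{V}$ is rational, and part (2) delivers the rationalizability of $\sqrt{f}$.
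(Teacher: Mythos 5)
Your Part (1) matches the paper's reduction: write $\sqrt{p/q}=\sqrt{pq}/q$, extract the squarefree part of $pq$ using unique factorization. (The paper is slightly more careful to verify that the multiplication/division by squares of elements of $Q$ preserves rationalizability via a lemma that uses injectivity of $k$-algebra homomorphisms between fields; you assert this but do not argue it.) Your Part (3) also arrives at the right conclusion via projection from a point of multiplicity $d-1$, which is the paper's strategy.

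However, Part (2) contains a genuine and fatal error: you define $\overline{V}$ to be the projective closure of $\{f=0\}\subset\mathbb{A}^{n}$, that is, the \emph{branch locus} of the double cover. The paper's $\overline{V}$ is instead the projective closure of the hypersurface $\{W^{2}=f\}\subset\mathbb{A}^{n+1}$ inside $\mathbb{P}^{n+1}$ --- an $n$-dimensional variety, birationally identical to $\overline{S}$ via the explicit map $(s,y_{1},\dots,y_{n},u)\mapsto(s,y_{1},\dots,y_{n},u/s^{r-1})$. With the paper's definition, (b)$\iff$(c) is an immediate birational equivalence, not a ``technical hurdle.'' With your definition, the equivalence is simply false: take $f=X^{3}+Y^{3}+1$ over $\mathbb{C}$; your $\overline{V}$ is a smooth plane cubic of genus $1$, which is not unirational, yet $\sqrt{f}$ is rationalizable (the associated double cover is a rational surface). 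Your proposed argument for (b)$\iff$(c) would not repair this, since the restriction of a dominant rational map $\mathbb{P}^{n}\dashrightarrow\overline{S}$ to the ramification divisor is in general neither defined on a $\mathbb{P}^{n-1}$ nor dominant onto the branch locus. Once the correct definition of $\overline{V}$ is in place, (a)$\iff$(b) follows from the function-field characterization of unirationality (a rationalizing morphism $\phi$ gives an embedding of $K(V)$ into $k(X_{1},\dots,X_{n})$ by sending $W$ to the square root of $\phi(f)$, and conversely), and (b)$\iff$(c) is the birational equivalence above. Your Part (3) claim that for $d=1$ the variety $\overline{V}$ ``is a hyperplane'' again reflects the wrong definition; with the paper's definition it is a quadric in $\mathbb{P}^{n+1}$, but the conclusion (rationality) is still correct.
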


By restricting to the physically relevant case $k=\CC$, and by only considering square roots of polynomials in one or two variables, we can give more precise criteria.

\begin{theorem}\label{thm:Main2}
Let $f$ be a squarefree polynomial of degree $d>0$ over $\CC$ and consider its square root $\sqrt{f}$.
Let $\overline{S}$ denote the double cover associated to $\sqrt{f}$.
Then, the following statements hold:
\begin{enumerate}
    \item If $f$ is a polynomial in one variable, then $\sqrt{f}$ is rationalizable if and only if $d\leq 2$.
    \item If $f$ is a polynomial in two variables and $\overline{S}$ has at most rational simple singularities, then $\sqrt{f}$ is rationalizable if and only if $d\leq 4$.
\end{enumerate}
\end{theorem}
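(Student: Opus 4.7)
My plan is to apply Theorem~\ref{thm:Main1}(2), which replaces rationalizability of $\sqrt{f}$ with unirationality of the associated double cover $\overline{S}$, and then to analyze $\overline{S}$ birationally in terms of the degree $d$.

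For part~(1), $\overline{S}$ is the smooth projective model of the affine curve $\{w^{2}=f(X)\}$, i.e.\ a hyperelliptic curve of geometric genus $g=\lfloor(d-1)/2\rfloor$. When $d\leq 2$ this genus is $0$, so $\overline{S}\cong\PP^{1}_{\CC}$ is rational, hence unirational; when $d\geq 3$ we have $g\geq 1$, and since unirational curves over an algebraically closed field are rational by L\"uroth's theorem, $\overline{S}$ cannot be unirational. Theorem~\ref{thm:Main1}(2) then gives the claimed equivalence.

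For part~(2), the cases $d\leq 2$ are immediate from Theorem~\ref{thm:Main1}(3). For $d\geq 3$ I would realize $\overline{S}$ as a double cover $\pi\colon\overline{S}\to\PP^{2}_{\CC}$ branched along a plane curve of degree $2k$, where $k=\lceil d/2\rceil$; when $d$ is odd, the line at infinity has to be added to the branch locus in order to make its class divisible by $2$ in $\operatorname{Pic}(\PP^{2})$. The hypothesis that $\overline{S}$ has at most rational simple (Du Val) singularities makes the minimal resolution $\mu\colon\widetilde{S}\to\overline{S}$ crepant, so the adjunction formula for cyclic double covers yields
\[
    K_{\widetilde{S}}\;=\;(\pi\circ\mu)^{*}\bigl((k-3)H\bigr),\qquad H=\mathcal{O}_{\PP^{2}}(1).
\]
For $d\in\{3,4\}$ one has $k=2$, so $-K_{\widetilde{S}}$ is nef and big with $K_{\widetilde{S}}^{2}=2$: $\widetilde{S}$ is a weak del~Pezzo surface of degree two, therefore rational over $\CC$, and so $\overline{S}$ is unirational. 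For $d\geq 5$ one has $k\geq 3$, so $K_{\widetilde{S}}$ is effective and a projection formula computation gives $p_{g}(\widetilde{S})\geq 1$; Castelnuovo's criterion then says $\widetilde{S}$ is not rational, and Castelnuovo's theorem that a unirational surface in characteristic zero is rational forces $\widetilde{S}$, and hence $\overline{S}$, to be non-unirational.

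The main technical point lies in part~(2), in the odd-$d$ case: one has to verify that the ``associated double cover'' defined in the paper agrees, up to birational equivalence, with the weighted-projective model $\{W^{2}=F(X,Y,Z)\cdot Z^{\epsilon}\}\subset\PP(1,1,1,k)$ used above, so that the Du~Val hypothesis transports correctly and the canonical-bundle computation on $\widetilde{S}$ is justified. Once this bookkeeping is settled, the geometric core of the theorem is the dichotomy at $d=4$ versus $d=5$ between a (possibly weak) del~Pezzo surface of degree two and a surface of non-negative Kodaira dimension.
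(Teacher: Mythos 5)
Your proposal is correct and follows essentially the same route as the paper. Part~(1) is the paper's Corollary~\ref{c:DegreeCriterion} (Riemann--Hurwitz on the double cover of $\PP^1$, then L\"uroth), and part~(2) is the paper's Theorem~\ref{thm:DegreeCriterion}: the paper also passes to a crepant resolution, recognizes the case $d\in\{3,4\}$ as a degree-two (weak) del~Pezzo, and rules out $d\geq 5$ via the Kodaira dimension becoming nonnegative together with Castelnuovo's theorem (packaged there through Lemma~\ref{l:EqvUniKodDeg}). The only stylistic difference is that you unfold the canonical-bundle calculation $K_{\widetilde S}=(\pi\circ\mu)^*((k-3)H)$ and the odd-degree branch-locus bookkeeping explicitly, whereas the paper cites Koll\'ar and BHPV for these facts.
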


Theorems~\ref{thm:Main1} and~\ref{thm:Main2} summarize a number of statements that are proven in Sections~\ref{sec:foundations} and~\ref{sec:criteria}, respectively.
More precisely, we proceed as follows:
in Subsection~\ref{sec:Rationalizability}, we introduce the  definition of (non-)rationalizability.
The associated hypersurface and double cover are defined in Subsections~\ref{sec:ProjSurf} and~\ref{sec:DoubleCover}.
In these subsections, we also prove that the rationalizability of $\sqrt{f}$ is equivalent to the unirationality of the associated hypersurface and double cover, respectively.
We prove the criteria for square roots of polynomials in one and two variables in Subsection~\ref{sec:OneVariable} and~\ref{sec:TwoVariables}.
In Subsection~\ref{sec:MoreVariables}, we deal with square roots of polynomials in more variables and give a  criterion for square roots of homogeneous polynomials.
Finally, in Subsection~\ref{sec:Alphabets}, we introduce the notion of rationalizability for sets of square roots and give a first partial condition that one can use to disprove it.
We end the section with a discussion on how to prove non-rationalizability for several sets of square roots that are directly related to recent Feynman integral computations in theoretical high energy particle physics.

Throughout the paper, we deliberately try to keep the statements simple so that they are easy to apply in practice; most proofs are kept compact, preferring abstract but short arguments over arguments that might be more elementary but longer.

\begin{acknowledgment}
We want to thank Duco van Straten, Stefan Weinzierl, and Robert M. Schabinger for many fruitful conversations.
Furthermore, we would like to thank Anne Fr\"uhbis-Kr\"uger for a detailed explanation of the {\tt Singular} library {\tt classify2.lib}.
The second author was supported by the grant SFB/TRR 45 at the Johannes Gutenberg University in Mainz.
We also thank the anonymous referees for useful comments and suggestions.
\end{acknowledgment}

\section{Foundations}\label{sec:foundations}
\subsection{Notion of rationalizability} 
\label{sec:Rationalizability}

Let $k$ be any field. 
Consider the polynomial ring $R:=k[X_1,...,X_n]$, and denote its field of fractions by $Q:=k(X_1,...,X_n)=\Frac R$.
If we let $f$ and $g$ be two polynomials in $R$ with $g$ non-zero, then $f/g\in Q$.
Fix an algebraic closure $\overline{Q}$ of $Q$, and consider the quantity $\sqrt{f/g}\in \Qbar$.

\begin{definition}\label{d:rationalizability}
We call the square root $\sqrt{f/g}$ {\it rationalizable} 
if there is a homomorphism of $k$-algebras $\phi\colon Q \to Q$ such that $\phi(f/g)$ is a square in $Q$. 
Otherwise, we say that $\sqrt{f/g}$ is {\it not rationalizable}.
\end{definition}

\begin{remark}
Since $\phi\colon Q\to Q$ is a homomorphism of $k$-algebras and since $Q$ is a field, $\phi$ is in particular a homomorphism of fields and preserves the zero element and the unit. 
This implies that $\phi$ is automatically non-constant and, in particular, injective.
\end{remark}

\begin{remark}
Definition~\ref{d:rationalizability} is motivated by the fact that physicists are looking for a {\em change of variables} that turns $f/g$ into a square while preserving the number of variables, i.e., the number of newly introduced variables should be equal to the number of original variables.
Such a change of variables is a non-constant homomorphism of $k$-algebras from $Q$ to $Q$ and vice versa.
\end{remark}

\begin{example}\label{e:defrationalizability}
    The square root $\sqrt{1-X^2}$ is rationalizable.
    If $k$ has characteristic $2$, then 
    $$
    1-X^2=X^2+1=(X+1)^2
    $$ 
    and hence $\sqrt{1-X^2}=X+1$ 
    (the rationalizing map being the identity).
    If $\mathrm{char}\, k\neq 2$,
    consider $\phi\colon k(X) \to k(X)$
    defined by 
    $$
    X \mapsto \frac{2X}{X^2+1},
    $$
    that is, the map that sends the polynomial $g(X)$ to $g\left(\frac{2X}{X^2+1}\right)$. 
    Then, we have 
    $$\phi (1-X^2)= \left( \frac{X^2-1}{X^2+1}\right)^2.$$
    A square root that is \textit{not} rationalizable is, for example, $\sqrt{1-X^3}$. 
    We will explain how to prove its non-rationalizability in Section \ref{sec:criteria} (cf. Corollary~\ref{c:DegreeCriterion}).
\end{example}

\begin{remark}
  Let us stress that our notion of non-rationalizability only implies that there is no \textit{rational} substitution that rationalizes the square root.
  For example, the substitution
  \begin{equation*}\label{eq:FakeMap}
      X\mapsto \sqrt[3]{-X^2+2X}
  \end{equation*}
  turns $\sqrt{1-X^3}$ into a square in $Q$.
  However, it does not define a homomorphism $Q\to Q$ since $\sqrt[3]{-X^2+2X}\notin Q$.
  Physicists dealing with Feynman integrals will mainly be interested in the existence of \textit{rational} substitutions since these do not introduce new square roots in other parts of their computation.
  Nevertheless, the existence of non-rational (called \textit{radical}) parametrizations is studied and applied to other contexts in~\cite{SS11, SS13} and~\cite{SSV17}.
\end{remark}

\begin{remark}
If $\A^n_k$ denotes the affine space with $R$ as its coordinate ring,
then homomorphisms of $k$-algebras $Q\to Q$ are in one-to-one correspondence with dominant rational maps $\A^n_k\to \A^n_k$  \cite[Theorem I.4.4]{Har77}.
This correspondence is crucial since it will allow us to switch between the algebraic and the geometric point of view, see Proposition~\ref{p:criterion}.
\end{remark}

Definition \ref{d:rationalizability} raises an obvious question:
given a square root $\sqrt{f/g}$, can we determine whether or not it is rationalizable?
To answer this question, we will use tools from algebraic geometry.
In particular, we will relate the rationalizability of a square root to the unirationality of a certain variety associated to it.
Before delving into this, let us prove the following elementary lemma.

\begin{lemma}\label{l:eqvrat}
If $p$ and $q$ denote two non-zero elements of $Q$, then $\sqrt{p}$ is rationalizable if and only if $\sqrt{pq^2}$ is rationalizable.
\end{lemma}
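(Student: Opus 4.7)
The plan is to observe that the \emph{same} rationalizing homomorphism works in both directions, since multiplying (or dividing) the image by the square $\phi(q)^2$ preserves the property of being a square in $Q$. The only subtlety is to ensure that $\phi(q)$ is non-zero so that it can be used as a denominator; this is precisely what the injectivity remark immediately following Definition~\ref{d:rationalizability} guarantees.

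For the forward direction, I would suppose $\sqrt{p}$ is rationalizable and pick a $k$-algebra homomorphism $\phi\colon Q\to Q$ together with an element $r\in Q$ satisfying $\phi(p)=r^2$. Since $\phi$ is a $k$-algebra homomorphism,
\[
\phi(pq^2)=\phi(p)\,\phi(q)^2=\bigl(r\,\phi(q)\bigr)^{2},
\]
which is a square in $Q$, so $\phi$ witnesses the rationalizability of $\sqrt{pq^2}$.

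For the converse, I would take a $k$-algebra homomorphism $\psi\colon Q\to Q$ and an element $s\in Q$ with $\psi(pq^2)=s^{2}$. Here the key point is that $\psi$ is injective by the remark after Definition~\ref{d:rationalizability}, so from $q\neq 0$ we get $\psi(q)\neq 0$ and may invert it in $Q$. Then
\[
\psi(p)=\frac{\psi(pq^2)}{\psi(q)^2}=\left(\frac{s}{\psi(q)}\right)^{2},
\]
a square in $Q$, so the same $\psi$ rationalizes $\sqrt{p}$.

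The argument is essentially a one-line manipulation in each direction; the only conceptual obstacle is the need for injectivity of $\phi$ to divide by $\phi(q)$, which is why it is worth recording the lemma explicitly before moving on to the reduction from rational to squarefree polynomial radicands.
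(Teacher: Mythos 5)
Your proof is correct and matches the paper's argument essentially verbatim: both directions use the same rationalizing homomorphism, exploit multiplicativity to factor $\phi(pq^2)=\phi(p)\phi(q)^2$, and in the converse invoke injectivity of the $k$-algebra homomorphism (a field map) to guarantee $\phi(q)\neq 0$ before dividing.
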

\begin{proof}
Assume that $\sqrt{p}$ is rationalizable. 
Then, there exists a non-constant homomorphism of $k$-algebras $\phi\colon Q\to Q$ such that $\phi(p)=r^2$ with $r\in Q$.
But this means that 
\begin{equation*}
    \phi(pq^2)=\phi(p)\phi(q)^2=r^2\phi(q)^2=(r\phi(q))^2\in Q.
\end{equation*}
Hence, $\sqrt{pq^2}$ is rationalizable.
\par\vspace{\baselineskip}
Conversely, assume that $\sqrt{pq^2}$ is rationalizable. 
Then, there is a non-constant homomorphism of $k$-algebras $\phi\colon Q\to Q$ such that $\phi(pq^2)$ is a square in $Q$.
As $\phi$ is a morphism of $k$-algebras, it follows that $\phi(pq^2)=\phi(p)\phi(q^2)=\phi (p)\phi(q)^2$ is a square in $Q$.
As $\phi$ is a non-constant morphism of $k$-algebras that are fields, it is injective, 
and so $\phi (q)$ is non-zero, and hence it follows that $\phi (p)=\phi (pq^2)/\phi (q)^2$ is a square.
Hence, $\sqrt{p}$ is rationalizable.
\end{proof}

\begin{corollary}\label{c:SqFreePol}
Let $p$ and $q$ be two polynomials in $R$ with $q$ non-zero and consider the fraction $p/q\in Q$.
Then there exists a squarefree polynomial $f\in R$ such that $\sqrt{p/q}$ is rationalizable if and only if $\sqrt{f}$ is.
\end{corollary}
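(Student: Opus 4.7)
The plan is to apply Lemma~\ref{l:eqvrat} twice: first to move from a fraction to a polynomial, then to strip off the square part of that polynomial. Only the UFD property of $R$ is needed beyond the previous lemma.

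First I would reduce to the case of a square root of a polynomial. Given $p/q \in Q$ with $q\neq 0$, observe that $(p/q)\cdot q^{2} = pq \in R$. Taking the roles of $p$ and $q$ in Lemma~\ref{l:eqvrat} to be $p/q$ and $q$ (both non-zero elements of $Q$), the lemma gives that $\sqrt{p/q}$ is rationalizable if and only if $\sqrt{pq}$ is rationalizable. So we may replace the fraction $p/q$ by the polynomial $pq$.

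Next I would extract the squarefree part. Since $R=k[X_1,\dots,X_n]$ is a UFD, the non-zero polynomial $pq$ admits an essentially unique factorization
\begin{equation*}
    pq \;=\; u\cdot\prod_{i} p_i^{e_i},
\end{equation*}
where $u\in k^\times$ and the $p_i\in R$ are pairwise distinct irreducible polynomials. Setting
\begin{equation*}
    f \;\defeq\; u\cdot\!\!\prod_{e_i\,\text{odd}}\!\! p_i \qquad\text{and}\qquad h \;\defeq\; \prod_{i} p_i^{\lfloor e_i/2\rfloor},
\end{equation*}
one has $pq = f\cdot h^{2}$ with $f$ squarefree and $h\in R$ non-zero. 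Applying Lemma~\ref{l:eqvrat} a second time, now to $f$ and $h$ as elements of $Q$, yields that $\sqrt{f}$ is rationalizable if and only if $\sqrt{f\cdot h^{2}}=\sqrt{pq}$ is rationalizable.

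Combining the two equivalences shows that $\sqrt{p/q}$ is rationalizable if and only if $\sqrt{f}$ is, with $f\in R$ squarefree as required. There is no real obstacle here: the content is entirely carried by Lemma~\ref{l:eqvrat} together with unique factorization in the polynomial ring, and the only minor point to check is that both invocations of the lemma are legitimate, which just requires $q$ and $h$ to be non-zero in $Q$, both of which hold by construction.
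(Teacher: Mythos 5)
Your proof is correct and follows essentially the same route as the paper: one application of Lemma~\ref{l:eqvrat} to replace $p/q$ by $pq$, then unique factorization in $R$ to write $pq=fh^2$ with $f$ squarefree, and a second application of the lemma. The only difference is that you spell out the factorization of $f$ and $h$ explicitly, which the paper leaves implicit.
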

\begin{proof}
By Lemma~\ref{l:eqvrat}, 
the square root $\sqrt{p/q}$ is
rationalizable if and only if the square root 
$\sqrt{q^2\cdot p/q}=\sqrt{p q}$ is.
Write $g:=p\cdot q$ and notice that $g\in R$.
As $R$ is a unique factorization domain, we can write $g=fh^2$, with $f,h\in R$ and $f$ squarefree.
Again by Lemma~\ref{l:eqvrat}, $\sqrt{g}$ is rationalizable if and only if $\sqrt{f}$ is,
proving the statement.
\end{proof}

\begin{remark}\label{r:Squarefree}
Let us stress that ignoring squares in the argument is, in many cases, even mandatory for our criteria to be applicable.
In other words, one often \textit{should} ignore square factors in a square root argument:
in Subsection~\ref{sec:TwoVariables}, we will present rationalizability criteria that require the associated variety of the square root to have at most rational simple singularities.
Simple singularities are, in particular, isolated singularities.
The problem is that, if the square root argument contained a square, the associated varieties 
(see Definition~\ref{def:AssHyp} and~\ref{def:AssDoubleCover}) 
would have non-isolated singularities, i.e., a singular locus of positive dimension, and our criteria would not be applicable.
\end{remark}

\subsection{From a square root to a projective hypersurface}
\label{sec:ProjSurf}
Recall that $R$ denotes the polynomial ring $k[X_1,...,X_n]$,  
and $Q=\Frac R$ its field of fractions.
We fixed an algebraic closure $\overline{Q}$ of $Q$.
By Corollary~\ref{c:SqFreePol}, we can reduce our study to square roots of squarefree polynomials.
Throughout this subsection, we use $f\in R$ to denote a non-constant squarefree polynomial of degree $d$ and
consider the square root $\sqrt{f}$ in $\overline{Q}$.

\begin{definition}\label{def:AffAssHyp}
Let $\A_k^{n+1}$ be the affine space over $k$ with coordinates $X_1,...,X_n,W$.
Let $V$ denote the hypersurface in $\A_k^{n+1}$ defined by the equation $W^2=f$, which is an affine variety of dimension $n$.
We call $V$ {\em the affine hypersurface associated to } $\sqrt{f}$.
\end{definition}
\begin{definition}\label{def:AssHyp}
Let $f$ and $V$ be defined as above, 
and let $\PP_k^{n+1}$ be the projective space over $k$ with coordinates $z,x_1,...,x_n,w$, 
where affine and projective coordinates have the following relations:
$X_i=x_i/z$ for $i=1,...,n$ and $W=w/z$.
We denote by $\overline{V}$ the projective closure of $V\subset \A_k^{n+1}$ in $\PP_k^{n+1}$.
We call $\overline{V}$ {\em the hypersurface associated to } $\sqrt{f}$.
The defining equation of $\overline{V}$ is given by
$$
z^{d-2}w^2-z^{d}f(x_1/z,...,x_n/z)=0,
$$
where $d$ denotes the degree of $f$.
\end{definition}

\begin{example}[Elliptic curve]\label{e:EllipticCubic}
    Consider the square root $\sqrt{X(X-1)(X-\lambda)}$ over $\CC$, for some $\lambda\in \CC\setminus\{0, 1\}$.
    Its associated hypersurface is the projective cubic plane curve $\overline{V}\subset\PP^2_\mathbb{C}$ defined by
   $$
   \overline{V}\colon zw^2-x(x-z)(x-\lambda z)=0. 
   $$
   Note that $\overline{V}$ is an elliptic curve in Legendre form.
\end{example}

The next step is to relate the rationalizability of a square root to the arithmetic of its associated hypersurface.
More precisely, we will show that the rationalizability of a square root is equivalent to the unirationality of its associated hypersurface.
We start by recalling the definitions of rationality and unirationality.

\begin{definition}\label{d:UniRat}
Let $Y$ be a variety defined over $k$, let $\overline{Y}$ be its projective closure, and denote their dimension by $N=\dim Y=\dim \overline{Y}$.
We say that $Y$ is {\em rational} over $k$ if there is a birational map 
$\PP^N\dashrightarrow \overline{Y}$.
We say that $Y$ is {\em unirational} over $k$ if there is a rational dominant map $\PP^N\dashrightarrow \overline{Y}$, i.e., a rational map $\PP^N\dashrightarrow \overline{Y}$ with dense image.
\end{definition}

\begin{remark}\label{r:AlgRat}
One can also rephrase the notions of rationality and unirationality in algebraic terms, cf.~\cite[Theorem I.4.4 and Corollary I.4.5]{Har77}:
a variety $Y$ is rational if its function field $K(Y)$ is isomorphic to a pure transcendental extension field of $k$ of finite type;
it is unirational if its function field can be embedded into a pure transcendental extension field of $k$ of finite type.
The transcendental degree of the extension field equals the dimension of $Y$.
\end{remark}

\begin{remark}\label{r:UniRatEqRat}
Notice that rationality always implies unirationality while the converse statement does, in general, not hold.
There are, however, some special cases in which the two notions are indeed equivalent.
\begin{itemize}
    \item Rationality and unirationality are always equivalent for varieties of dimension one defined over any field (L\"uroth's theorem, cf.~\cite[Example IV.2.5.5]{Har77}).
    \item Over algebraically closed fields of characteristic $0$, the notions of rationality and unirationality are also equivalent for varieties of dimension  two (Castelnuovo's theorem, cf.~\cite[Remark V.6.2.1]{Har77}). 
    This is not true for varieties of higher dimension (see the counterexamples in~\cite{AM72}).
    \item Over non-algebraically closed fields of characteristic $0$ or algebraically closed fields of positive characteristic, things are more complicated: 
    only the equivalence in the one-dimensional case is proven to hold;
    in higher dimensions, the equivalence of unirationality and rationality is either disproved or still disputed, depending on the dimension and the base field.
\end{itemize}
\end{remark}

\begin{proposition}\label{p:criterion}
Let $f$ and $V$ be defined as above.
Then, the square root $\sqrt{f}$ is rationalizable if and only if $V$ is unirational over $k$.
\end{proposition}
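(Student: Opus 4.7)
The plan is to identify the function field of $V$ with $Q(\sqrt{f})$ and then translate both rationalizability and unirationality into parallel statements about $k$-algebra embeddings into $Q$. First I would observe that, since $f$ is a non-constant squarefree polynomial, $W^2-f$ is irreducible in $Q[W]$ (otherwise $f$ would be a square in $Q$, hence in $R$ by Gauss's lemma, contradicting squarefreeness), so $V$ is an integral affine variety of dimension $n$ with function field $K(V)\cong Q(\sqrt{f})$. By Remark~\ref{r:AlgRat}, $V$ is unirational over $k$ if and only if $K(V)$ admits a $k$-embedding into a purely transcendental extension $k(T_1,\dots,T_n)$ of $k$, which is $k$-isomorphic to $Q$ itself. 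Hence unirationality of $V$ is equivalent to the existence of a $k$-algebra embedding $\psi\colon Q(\sqrt{f})\hookrightarrow Q$.

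For the forward implication, suppose $\sqrt{f}$ is rationalizable, so that there is some $\phi\colon Q\to Q$ with $\phi(f)=r^2$ for some $r\in Q$. I would then extend $\phi$ to a map $\tilde\phi\colon Q(\sqrt{f})\to Q$ by sending $\sqrt{f}\mapsto r$. This is a well-defined $k$-algebra homomorphism because the defining relation $\sqrt{f}^2=f$ is carried to $r^2=\phi(f)$, and it is automatically injective because its domain is a field. By the equivalence of the previous paragraph, $V$ is unirational.

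For the converse, starting from a $k$-algebra embedding $\psi\colon Q(\sqrt{f})\hookrightarrow Q$, I would restrict $\psi$ to the subfield $Q$ to obtain a $k$-algebra homomorphism $\phi\colon Q\to Q$, and then compute
\begin{equation*}
    \phi(f)=\psi(f)=\psi\bigl(\sqrt{f}^2\bigr)=\psi(\sqrt{f})^2,
\end{equation*}
which exhibits $\phi(f)$ as a square in $Q$. Thus $\sqrt{f}$ is rationalizable.

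The delicate point in this argument is really just the dimension bookkeeping in the first step: one must check that the hypersurface $V\subset\mathbb{A}^{n+1}_k$ has dimension exactly $n$, matching the number of variables of $Q$, so that unirationality translates to an embedding into a field of rational functions in precisely $n$ variables rather than in some larger number. This matches the physicists' requirement (emphasized in the excerpt) that the change of variables preserve the number of variables, and it is what allows the algebraic and geometric formulations to coincide. Beyond this, the argument is a short unwinding of the definitions together with the dictionary in Remark~\ref{r:AlgRat}.
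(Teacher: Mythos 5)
Your proof is correct and follows essentially the same route as the paper's: both translate unirationality via Remark~\ref{r:AlgRat} into the existence of a $k$-algebra embedding $K(V)\hookrightarrow Q$, then go back and forth between such an embedding and a rationalizing homomorphism $\phi\colon Q\to Q$ by extending $\phi$ over $W\mapsto r$ in one direction and restricting/precomposing with the inclusion $Q\hookrightarrow K(V)$ in the other. The only cosmetic difference is that you first identify $K(V)$ with $Q(\sqrt{f})$ (and correctly note that squarefreeness guarantees the irreducibility of $W^2-f$, so this field is well-defined), whereas the paper works directly with $\Frac\bigl(k[X_1,\dots,X_n,W]/(W^2-f)\bigr)$.
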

\begin{proof}
First assume that $\sqrt{f}$ is rationalizable.
This means that 
there is a non-constant homomorphism of $k$-algebras 
$\phi\colon Q\to Q$ 
such that
$\phi (f)=h^2\in Q$ for some $h\in Q$.
For $i=1,...,n$ let $\phi_i=\phi(X_i)$ denote the image of $X_i$ via $\phi$.
The function field of $V$
is 
$$
K(V)=\Frac\left( \frac{k[X_1,...,X_n,W]}{(W^2-f)}\right).
$$
To show that $V$ is unirational, it suffices to show that $K(V)$ can be embedded into a pure transcendental extension field of $k$ of finite type (cf. Remark~\ref{r:AlgRat}), e.g., embedded into $Q=k(X_1,...,X_n)$.
Consider the homomorphism of $k$-algebras $\Phi\colon K(V) \to Q$ 
defined by sending $X_i$ to $\phi_i$ for $i=1,...,n$, and by sending $W$ to $h$.
As $\phi (f)=h^2$, the map $\Phi$ is well-defined;
as $\phi$ is non-constant, $\Phi$ is non-constant and hence injective, proving the unirationality of $V$.

Conversely, 
assume that $V$ is unirational over $k$. 
As noted in Remark~\ref{r:AlgRat}, this means that there is an injective homomorphism of $k$-algebras 
$$
\Phi\colon K(V)= \Frac\left( \frac{k[X_1,...,X_n,W]}{(W^2-f)}\right) \to Q.
$$
Consider the embedding $\iota\colon Q \to K(V)$ defined by sending the element $X_i$ to its equivalence class in $\Frac\left( \frac{k[X_1,...,X_n,W]}{(W^2-f)}\right)$.
Then $\iota (f)=W^2$ and let $g:=\Phi (W)$ be the image of $W$ via $\Phi$ in $Q$. 
The composition $\Phi\circ\iota\colon Q \to Q$ thus sends $f$ to $\Phi (\iota(f))=\Phi(W^2)=\Phi (W)^2=g^2$,
showing that $\sqrt{f}$ is rationalizable.
\end{proof}

\begin{remark}\label{r:affineprojratequiv}
    The projective closure $\overline{Y}$ of a variety $Y$ is always birationally equivalent to the variety itself.
    Therefore, the unirationality of $\overline{Y}$ is equivalent to the unirationality of $Y$.
    This is reflected in Definition~\ref{d:UniRat},
    where we define the (uni)rationality of any variety (affine or projective) only in terms of its projective closure.
    Thus, one can replace the variety $V$ by its projective closure $\overline{V}$ in the statement of Proposition~\ref{p:criterion}.
\end{remark}

An immediate consequence of Proposition~\ref{p:criterion} is that the rationalizability of a square root only depends on the number of variables that {\em actually appear} in the polynomial, 
and not the total number of variables of the ambient ring.
This is shown in the following corollary.

\begin{corollary}\label{c:Cones}
Let $f\in R$ be defined as before and assume that, after reordering the variables, there exists an $m<n$ such that $f\in R^\prime:=k[X_1,...,X_m]\subset R$.
Then the square root of $f$ viewed as polynomial in $R^\prime$ is rationalizable if and only if the square root of $f$ viewed as polynomial in $R$ is.
\end{corollary}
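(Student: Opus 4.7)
The plan is to invoke Proposition~\ref{p:criterion} and exploit the obvious product structure of the associated affine hypersurface. Let $V\subset\A_k^{n+1}$ be the affine hypersurface $W^2=f$ associated with $\sqrt{f}$ regarded as a square root in $R$, and let $V'\subset\A_k^{m+1}$ be the affine hypersurface $W^2=f$ associated with $\sqrt{f}$ regarded in $R'$. Because $f$ does not involve $X_{m+1},\ldots,X_n$, the defining equation of $V$ imposes no constraint on these variables, so
\[
V \;=\; V'\times\A_k^{n-m}.
\]
By Proposition~\ref{p:criterion} together with Remark~\ref{r:affineprojratequiv}, it then suffices to prove that $V'$ is unirational over $k$ if and only if $V$ is.

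The forward implication is immediate: a dominant rational map $\A_k^m\dashrightarrow V'$, taken in product with the identity on $\A_k^{n-m}$, yields a dominant rational map
\[
\A_k^n \;=\; \A_k^m\times\A_k^{n-m} \;\dashrightarrow\; V'\times\A_k^{n-m} \;=\; V.
\]

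For the converse, I start from a dominant rational map $\sigma\colon\A_k^n\dashrightarrow V$ and compose it with the first projection $\mathrm{pr}_1\colon V=V'\times\A_k^{n-m}\to V'$ to produce a dominant rational map $\tau=\mathrm{pr}_1\circ\sigma\colon\A_k^n\dashrightarrow V'$, whose generic fibre has dimension $n-m$. The remaining task is to cut the source from $\A_k^n$ down to $\A_k^m$ while preserving dominance; I would do this by restricting $\tau$ to a sufficiently general affine linear subspace $L\cong\A_k^m\subset\A_k^n$. A standard Bertini-type dimension count shows that for such $L$ the intersection with a generic fibre of $\tau$ is zero-dimensional, so $\tau|_L$ is generically finite onto $V'$ and hence, since $\dim L=\dim V'=m$, dominant.

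The crux of the argument, and essentially the only obstacle, is this final restriction step: one must guarantee the existence of a sufficiently general $k$-linear slice $L$ with the required property. This is automatic when $k$ is infinite (in particular for $k=\CC$, the setting relevant to Feynman integral computations), where generic members of the family of $m$-dimensional linear subspaces can be chosen; over very small finite base fields a scalar extension or a more careful ad hoc construction would be needed, but the statement of the corollary holds in the intended applications.
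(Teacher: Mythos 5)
Your proof is correct and takes essentially the same route as the paper: both exploit the product structure of the associated variety ($V = V'\times\A_k^{n-m}$, or in the paper's projective form, $\overline{V}$ birational to $\overline{V'}\times\PP_k^{n-m}$) and then invoke Proposition~\ref{p:criterion}. The paper simply asserts that $\overline{V'}\times\PP_k^{n-m}$ is unirational if and only if $\overline{V'}$ is, whereas you supply the nontrivial half of that equivalence via the Bertini-type restriction to a generic linear slice; you also correctly flag that this slicing step uses an infinite base field, a hypothesis the paper leaves implicit but which is harmless in its intended setting $k=\CC$.
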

\begin{proof}
Let $\overline{V}\subset \PP_k^{n+1}$ and $\overline{V^\prime} \subset \PP_k^{m+1}$.
Since the variables $X_{m+1},...,X_{n}$ do not appear in $f$,
the hypersurface $\overline{V}$ is birationally equivalent to $\overline{V^\prime}\times \PP_k^{n-m}$.
So $\overline{V}$ is unirational if and only $\overline{V^\prime}\times \PP_k^{n-m}$ is, 
which in turn is unirational if and only if $\overline{V^\prime}$ is.
The statement hence follows from Proposition~\ref{p:criterion}.
\end{proof}

Using Proposition~\ref{p:criterion}, we can determine the (non-)rationalizability of a square root by studying the unirationality of its associated hypersurface.
Doing this is, however, still a highly non-trivial task.
While the unirationality of varieties is well-studied for one- and two-dimensional varieties over algebraically closed fields of characteristic $0$ , the unirationality of varieties of higher dimension is largely not understood (cf. Remark~\ref{r:GeoRef}).
For this reason, in Section~\ref{sec:criteria}, we will assume $k=\mathbb{C}$ and focus on square roots in one or two variables.
Nevertheless, there are also some partial results holding in any characteristic. 
For example, we have the following corollary.

\begin{corollary}\label{c:SingProj}
Assume $k$ to be algebraically closed.
If $\overline{V}$ has a point of multiplicity $d-1$, then $\sqrt{f}$ is rationalizable.
In particular, if $d\leq 2$, then $\sqrt{f}$ is rationalizable.
\end{corollary}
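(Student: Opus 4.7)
The plan is to prove the first statement by the classical construction of projection from a point of high multiplicity, and then derive the particular case $d\leq 2$ from it (with the trivial case $d=1$ requiring a direct variable substitution). In both situations the strategy is to exhibit a birational map $\overline{V}\dashrightarrow \PP^n_k$, so that $\overline{V}$ is rational, a fortiori unirational, and the conclusion follows from Proposition~\ref{p:criterion} together with Remark~\ref{r:affineprojratequiv}.

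To prove the main statement, I take the given point $P\in\overline{V}$ of multiplicity $d-1$, which forces $d\geq 2$, and identify $\PP^n_k$ with the projective space of lines through $P$ in $\PP^{n+1}_k$. The projection $\pi_P\colon \overline{V}\dashrightarrow \PP^n_k$, $Q\mapsto \overline{PQ}$, is birational: the tangent cone of $\overline{V}$ at $P$ is a proper hypersurface of degree $d-1$ in the space of tangent directions, so a generic line $\ell$ through $P$ avoids it and meets $\overline{V}$ at $P$ with intersection multiplicity exactly $d-1$; since $\deg\overline{V}=d$, Bezout's theorem then forces $\ell$ to have exactly one residual intersection with $\overline{V}$, which defines the rational inverse $\PP^n_k\dashrightarrow \overline{V}$.

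For the ``in particular'' part, I treat the two values of $d$ separately. If $d=2$, the hypersurface $\overline{V}$ is a quadric with defining polynomial $w^2-z^2f(x_1/z,\dots,x_n/z)$; since $f$ is squarefree and non-constant, this polynomial is not a perfect square, so $\overline{V}$ is a reduced quadric over an algebraically closed field and hence carries smooth points, each of multiplicity $1=d-1$, so the main statement applies. If $d=1$, I argue directly: writing $f=a_1X_1+\cdots+a_nX_n+c$ with $a_1\neq 0$ after permuting variables, the $k$-algebra endomorphism $\phi\colon Q\to Q$ sending $X_1$ to $(X_1^2-a_2X_2-\cdots-a_nX_n-c)/a_1$ and fixing the other variables satisfies $\phi(f)=X_1^2$, a square in $Q$.

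The step I expect to be the main obstacle is the verification that the projection $\pi_P$ is genuinely birational, rather than merely dominant of some higher degree. The key technical point is that a generic line through $P$ meets $\overline{V}$ at $P$ with intersection multiplicity exactly $d-1$ rather than more; this hinges on the tangent cone at $P$ being a proper subvariety of the $n$-dimensional projective space of lines through $P$, which is guaranteed by $P$ having multiplicity strictly less than $\deg\overline{V}=d$.
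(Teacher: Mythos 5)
Your proof is correct and follows the paper's main idea: project from the point $P$ of multiplicity $d-1$, argue birationality via the tangent-cone-plus-Bezout residual-point count, and conclude via Proposition~\ref{p:criterion}. The technical point you flag (generic line meets $\overline{V}$ at $P$ with multiplicity exactly $d-1$) is precisely the content of the projection argument, and your justification is the standard one.

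The one place where you diverge from the paper is the case $d=1$. The paper observes that for $d\in\{1,2\}$ the projective closure $\overline{V}$ actually has degree $2$ --- for $d=1$ the homogenized equation is $w^2 - zF(z,x_1,\dots,x_n)=0$ with $F$ linear, which is quadratic --- so one can project from any smooth point just as in the main case; this keeps the whole corollary inside a single geometric mechanism. You instead treat $d=1$ by a bare-hands $k$-algebra substitution $X_1\mapsto (X_1^2 - a_2X_2-\cdots - a_nX_n - c)/a_1$, which does the job (the substitution is injective on $R$ since the image has transcendence degree $n$, so it extends to $Q\to Q$, and $\phi(f)=X_1^2$). Your route for $d=1$ is more elementary and makes the rationalizing map explicit, at the small cost of departing from the uniform geometric picture; the paper's route is slicker but relies on noticing the degree jump in the projective closure. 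Both are valid.
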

\begin{proof}
If $d>2$ then $\overline{V}$ is a variety of degree $d$. 
Assume $\overline{V}$ has a point $P$ of multiplicity $d-1$. 
Then the projection from $P$ gives a birational map  $\mathbb{P}_k^N\dashrightarrow\overline{V}$.
Therefore $\overline{V}$ is rational, hence unirational (cf. Remark~\ref{r:UniRatEqRat}) and so, by Proposition~\ref{p:criterion}, $\sqrt{f}$ is rationalizable.
For $d=1,2$, the variety $\overline{V}$ is of degree $2$ and so any of its regular points is  of multiplicity $1$. 
One can then project from any of these points and reason as above to prove the statement.
\end{proof}

\begin{remark}
For a detailed discussion of Corollary~\ref{c:SingProj} and a software package that returns an explicit rational parametrization of a degree-$d$ hypersurface with a point of multiplicity $d-1$, see~\cite{Besier:2018jen} and~\cite{Besier:2019kco}.
\end{remark}

\begin{remark}\label{r:GeoRef}
For the interested reader, the literature on rationality of varieties in dimension~$1$ and~$2$ is vast and starts from the 19th century. 
For a modern account see, among others,
\cite{ACGH85} and~\cite[Chapter IV]{Har77} for curves;
~\cite[Chapters V]{Har77}, and~\cite[Chapters 1---3]{KSC04} for surfaces. 
It is important to remark that the arithmetic of surfaces is much more complicated than the one of curves, 
and the mathematical tools needed to a comprehensive study of it go well beyond the scope of this article.
\end{remark}

\subsection{From a square root to a double cover}
\label{sec:DoubleCover}

In addition to the associated hypersurface defined in the previous subsection, one can also associate another variety to a square root of a polynomial.
We will see that these two varieties are not isomorphic in general but always birationally equivalent to each other.
Hence, in view of our rationalizability study, the two approaches are equivalent.
The approach described in this subsection is particularly convenient for a more geometrical analysis;
the approach described in Subsection~\ref{sec:ProjSurf} is more suitable for a generalization and, a priori, requires less advanced geometrical tools.
Both approaches have advantages and disadvantages in different contexts, and we will use both throughout this paper.

In this subsection,
let $k$ be any field and $f\in R=k[X_1,...,X_n]$ be a non-constant squarefree polynomial of degree $d$.
Define $r:=\ceil{d/2}$.
Consider the square root  $\sqrt{f}$.
Let $\A_k^{n+1}$ be the affine space over $k$ with variables $X_1,...,X_n,W$.
Let $\PP_k=\PP_k(1,...,1,r)$ be the weighted projective space over $k$ with coordinates $s,y_1,...,y_n,u$ of weights $1,1,...,1,r,$ respectively.
The relations between the affine and projective coordinates are $X_i=y_i/s$ for $i=1,...,n$ and $W=u/s$.

\begin{definition}\label{def:AssDoubleCover}
We define {\em the double cover associated to} $\sqrt{f}$ to be the hypersurface in $\PP_k$ given by 
$$
\overline{S}\colon u^2-s^{2r}f(y_1/s,...,y_n/s)=0.
$$
\end{definition}
\begin{remark}
The associated affine hypersurface $V\subset \A_k^{n+1}$ 
(cf. Definition~\ref{def:AffAssHyp}) 
has a natural structure of double cover of $\A_k^n$.
If $S$ denotes $V$ viewed as a double cover, then
$\overline{S}$ is the projective closure of $S$ in $\PP_k$.
\end{remark}

\begin{proposition}\label{prop:Birational}
Let $\overline{V}$ and $\overline{S}$ be the hypersurface and the double cover associated to $\sqrt{f}$.
Then $\overline{V}$ and $\overline{S}$ are birationally equivalent.
\end{proposition}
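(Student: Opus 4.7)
The plan is to exhibit the original affine hypersurface $V\colon W^2=f$ as a common dense open subvariety of both $\overline{V}$ and $\overline{S}$. Once this is done, the identity map on $V$ extends to a birational map between the two compactifications, and birational equivalence follows immediately.

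First I would analyze the affine chart $\{z\neq 0\}$ of $\mathbb{P}_k^{n+1}$. Setting $z=1$ in the defining equation
\[
z^{d-2}w^2-z^d f(x_1/z,\ldots,x_n/z)=0
\]
of $\overline{V}$ yields exactly $w^2-f(x_1,\ldots,x_n)=0$, so the open subset $\overline{V}\cap\{z\neq 0\}$ is canonically isomorphic to $V$. Since the complement $\overline{V}\cap\{z=0\}$ is cut out by an extra equation, it has strictly smaller dimension than $\overline{V}$, and hence $V$ sits inside $\overline{V}$ as a dense open subvariety.

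Next I would perform the analogous computation for $\overline{S}\subset\mathbb{P}_k(1,\ldots,1,r)$. The affine chart $\{s\neq 0\}$ of the weighted projective space is canonically isomorphic to $\mathbb{A}_k^{n+1}$ via
\[
(s,y_1,\ldots,y_n,u)\longmapsto (y_1/s,\ldots,y_n/s,\,u/s^{r}),
\]
which is well-defined precisely because $u$ has weight $r$ and the $y_i$ have weight $1$. Under this identification, the defining equation $u^2-s^{2r}f(y_1/s,\ldots,y_n/s)=0$ becomes $W^2=f(X_1,\ldots,X_n)$, i.e.\ again the affine variety $V$. As before, the complement $\overline{S}\cap\{s=0\}$ is a hyperplane section of strictly smaller dimension, so $V$ is dense in $\overline{S}$.

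Since $V$ is a common dense open subvariety of both $\overline{V}$ and $\overline{S}$, the identity $V\to V$ is a birational map $\overline{V}\dashrightarrow\overline{S}$, which proves the proposition. No part of this argument is a real obstacle; the only point that deserves a moment of care is that the weighted projective space $\mathbb{P}_k(1,\ldots,1,r)$ is not in general smooth, but this is irrelevant here because the specific chart $\{s\neq 0\}$ we use is still canonically isomorphic to ordinary affine space, so the dehomogenisation makes sense on the nose.
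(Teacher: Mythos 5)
Your proof is correct and in substance the same as the paper's: both rest on the observation that $\overline{V}$ and $\overline{S}$ are compactifications of the same affine hypersurface $V\colon W^2=f$, identified in the charts $\{z\neq 0\}$ and $\{s\neq 0\}$ respectively. The paper merely packages this as a pair of explicit mutually inverse rational maps between the ambient spaces, $(s,y_1,\dots,y_n,u)\mapsto (s,y_1,\dots,y_n,u/s^{r-1})$ and $(z,x_1,\dots,x_n,w)\mapsto (z,x_1,\dots,x_n,z^{r-1}w)$, which restrict to exactly your identity map on the common dense open $V$.
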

\begin{proof}
Define the rational map $\Phi\colon \PP_k \dashrightarrow \PP_k^{n+1}$ via 
$$
\Phi\colon (s,y_1,...,y_n,u)\mapsto (s,y_1,...,y_n,u/s^{r-1}).
$$
Then, $\Phi$ is well-defined over a Zariski open subset of $\PP_k=\PP_k (1,1,...,1,r)$, sends $\overline{S}$ to $\overline{V}$, and admits an inverse,
namely the rational map $\Psi\colon\mathbb{P}_k^{n+1}\dashrightarrow\mathbb{P}_k$ defined by
$$
\Psi\colon (z,x_1,...,x_n,w)\mapsto (z,x_1,...,x_n,z^{r-1}w).
$$
Hence, $\Phi$ is a birational map from $\overline{S}$ to $\overline{V}$.
\end{proof}

\begin{example}
Take $k$ to be the field of complex numbers $\CC$. 
In Example~\ref{e:EllipticCubic}, we have seen that the associated hypersurface of the square root $\sqrt{X(X-1)(X-\lambda)}$ is the elliptic curve 
$\overline{V}\subset\PP_k^2$ defined by
$$
\overline{V}\colon zw^2-x(x-z)(x-\lambda z)=0.
$$

The double cover associated to the square root is the curve $\overline{S}$ in the weighted projective space $\PP_k(1,1,2)$ with coordinates $s,y,u$ of weights $1,1,2$, respectively,
defined by the equation
$$
\overline{S}\colon u^2-sy(y-s)(y-\lambda s)=0.
$$
Using the map $\overline{S}\to\PP_k^1$ defined by  $(s:y:u)\mapsto (s:y)$, 
one sees that $\overline{S}$ is a double cover of $\PP_k^1$ ramified at four points: $(0:1:0), (1:0:0), (1:1:0), (1:\lambda:0)$. Hence, $\overline{S}$ is an elliptic curve (by Hurwitz's theorem, cf.~\cite[Corollary IV.2.4]{Har77}).
Finally, Proposition~\ref{prop:Birational} shows that $\overline{S}$ and $\overline{V}$ are indeed birationally equivalent, although a priori they might look very different.
\end{example}
\section{Rationalizability criteria}\label{sec:criteria}
In the subsections that follow, we will always assume the base field to be $k=\mathbb{C}$ and use the shorthand notations $\mathbb{A}^n:=\mathbb{A}^n_k$, $\mathbb{P}^n:=\mathbb{P}^n_k$, and $\mathbb{P}:=\mathbb{P}_k$.

\subsection{Square roots in one variable} 
\label{sec:OneVariable}
Studying the rationalizability of square roots of polynomials in one variable is rather easy:
it all boils down to computing the geometric genus of the curve associated to the square root.
Until the end of this subsection, $f$ will always be a squarefree polynomial in $\CC [X]$ of degree $d>0$.
Let $\A^2$ be the affine plane with coordinates $X$ and $W$. 
Let $C$ denote the affine curve associated to the square root $\sqrt{f}$, and let $\PP^2$ denote the projective plane with coordinates $z,x,w$ and relations $X=x/z, \; W=w/z$.
We write $\overline{C}$ for the projective closure of $C$ in $\PP^2$.

\begin{theorem}\label{thm:CriterionForCurves}
The square root $\sqrt{f}$ is rationalizable if and only if $\overline{C}$ has geometric genus~$0$.
\end{theorem}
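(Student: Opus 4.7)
The plan is to combine Proposition~\ref{p:criterion} (which converts rationalizability into a geometric statement about $\overline{C}$) with the fact that, for curves over $\mathbb{C}$, unirationality and having geometric genus zero are both equivalent to rationality. Since $\overline{C}$ is a plane curve of dimension $1$, the ambient geometric setting is exactly the one in which L\"uroth's theorem applies, so no extra machinery is required.

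First, I would argue that $\sqrt{f}$ is rationalizable if and only if $\overline{C}$ is unirational over $\mathbb{C}$. This is essentially the content of Proposition~\ref{p:criterion}, combined with Remark~\ref{r:affineprojratequiv}, which lets us replace the affine curve $C$ by its projective closure $\overline{C}$ without affecting (uni)rationality. Second, since $\overline{C}$ is one-dimensional, L\"uroth's theorem (as recalled in Remark~\ref{r:UniRatEqRat}) tells us that $\overline{C}$ is unirational if and only if it is rational. Third, I would invoke the classical fact that a projective curve is rational if and only if its geometric genus is zero: by definition, the geometric genus of $\overline{C}$ equals the genus of its normalization $\widetilde{C}$, and $\widetilde{C}$ is a smooth projective curve of genus $0$ precisely when it is isomorphic to $\mathbb{P}^1$, which in turn is equivalent to $\overline{C}$ being birational to $\mathbb{P}^1$. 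Chaining these three equivalences yields the theorem.

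The only subtlety I anticipate is that $\overline{C}$ may be singular, either at the points where $f$ has multiple roots (ruled out here, since $f$ is squarefree, so the affine part $C$ is smooth) or at the point at infinity $(0:1:0)$, whose multiplicity depends on the parity of $d$. Hence one must be mindful that "geometric genus" refers to the genus of the normalization rather than the arithmetic genus computed from the degree via the adjunction formula; but this is precisely the standard definition, and it is the quantity that governs rationality of a possibly singular curve. All remaining work is bookkeeping: citing L\"uroth and the genus-zero characterization of $\mathbb{P}^1$, both of which are classical and already referenced in the paper.
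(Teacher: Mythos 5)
Your proof is correct and follows essentially the same route as the paper: Proposition~\ref{p:criterion} (with Remark~\ref{r:affineprojratequiv}) to pass to unirationality of $\overline{C}$, L\"uroth's theorem via Remark~\ref{r:UniRatEqRat} to upgrade this to rationality, and the classical fact that a complex curve is rational precisely when its geometric genus vanishes. Your extra remark distinguishing the geometric genus (of the normalization) from the arithmetic genus, in view of the possible singularity at infinity, is a sensible clarification but adds nothing beyond the paper's argument.
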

\begin{proof}
This immediately follows from Remark~\ref{r:UniRatEqRat} and Proposition~\ref{p:criterion},
keeping in mind that a curve over $\CC$ is rational if and only if it has geometric genus~$0$ (cf.~\cite{lueroth1876,Clebsch:genus}).
For a modern reference, see \cite[Theorems 4.11, 4.62]{Sendra:2008}.
\end{proof}

\begin{remark}
After establishing the existence of a parametrization of the curve,
the next natural question is about the possibility of explicitly providing it.
Whenever a curve $C$ has geometric genus $0$, one can find a rational parametrization (cf.~\cite[p. 133]{Sendra:2008}).
\end{remark}

In fact one can also decide the rationalizability of $\sqrt{f}$ just by looking at the degree of $f$, as shown by the following corollary.
Note that, for this result to hold, it is crucial to assume that $f$ is a {\em squarefree} polynomial.

\begin{corollary}\label{c:DegreeCriterion}
    The square root $\sqrt{f}$ is rationalizable if and only if $d\leq 2$.
\end{corollary}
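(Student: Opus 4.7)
The plan is to use Theorem~\ref{thm:CriterionForCurves}, which reduces rationalizability of $\sqrt{f}$ to vanishing of the geometric genus of the associated projective curve $\overline{C}\colon z^{d-2}w^2 = z^d f(x/z)$. So the whole corollary becomes a genus calculation for a plane curve of hyperelliptic type, together with a small argument handling $d\le 2$.

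For the easy direction ($d\le 2$ implies rationalizability), I would simply appeal to Corollary~\ref{c:SingProj}: the hypersurface $\overline V=\overline C$ then has degree at most $2$, so any smooth point of $\overline C$ is a point of multiplicity $d-1\in\{0,1\}$, and Corollary~\ref{c:SingProj} gives the conclusion. (Alternatively, one can write the explicit parametrization in the $d=2$ case, generalizing the one used in Example~\ref{e:defrationalizability}.)

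For the hard direction ($d\ge 3$ implies non-rationalizability), I would switch from $\overline V$ to the associated double cover $\overline S$, which is birational to $\overline V$ by Proposition~\ref{prop:Birational}; the point of the switch is that the projection to $\mathbb{P}^1$ coming from $(s\colon y\colon u)\mapsto (s\colon y)$ exhibits $\overline S$ as a double cover of $\mathbb P^1$, so its smooth projective model is the standard hyperelliptic curve $w^2=f(x)$, whose genus is well-known to be $\lfloor (d-1)/2\rfloor$. I would derive this genus quickly from Hurwitz's formula applied to the double cover: the ramification locus consists of one point above each simple root of $f$ (there are $d$ of them), plus the point at infinity exactly when $d$ is odd, giving total ramification $R=2\lfloor d/2\rfloor + (d \bmod 2)\cdot 2$ — either way $R=d$ if $d$ is even and $R=d+1$ if $d$ is odd — so that $2g-2 = -4 + R$ yields $g=\lfloor (d-1)/2\rfloor$. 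For $d\ge 3$ this is at least $1$, and Theorem~\ref{thm:CriterionForCurves} then forbids rationalizability.

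The only step requiring care is the branching analysis at infinity on $\overline S$ in weighted projective space $\mathbb P(1,1,r)$ with $r=\lceil d/2\rceil$: in the odd-degree case $d=2r-1$ the equation $u^2 = s^{2r}f(y/s)$ has a factor of $s$ on the right, which produces the extra branch point at $(s\colon y)=(0\colon 1)$, while in the even case there is no such ramification. Since this is a standard computation on hyperelliptic curves, I do not expect any real obstacle — the whole corollary is essentially a bookkeeping consequence of Theorem~\ref{thm:CriterionForCurves} and the classical genus formula.
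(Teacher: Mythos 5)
Your proposal is correct and follows essentially the same route as the paper: reduce to the geometric genus of the associated curve via Theorem~\ref{thm:CriterionForCurves}, pass to the double cover $\overline{S}\to\mathbb{P}^1$, and compute the genus via Riemann--Hurwitz, with ramification above the $d$ roots of $f$ plus the point at infinity when $d$ is odd. The only cosmetic difference is that you treat $d\le 2$ separately via Corollary~\ref{c:SingProj}, whereas the paper lets the genus formula handle all degrees at once.
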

\begin{proof}
    Following Subsection~\ref{sec:DoubleCover}, 
    let $\overline{S}$ be the double cover associated to $\sqrt{f}$, i.e.,
    the double cover of $\PP^1$ ramified above the zeros of $f$ and, if $d$ is odd, over the point at infinity.
    Then the Riemann--Hurwitz formula (cf.~\cite[Corollary IV.2.4]{Har77})  tells us that 
    $$
     2g(\overline{S})-2=2(2g(\PP^1)-2)+\sum_{P\in \overline{S}} (e_P-1),
     $$
     where $e_P$ is the ramification index of $P\in \overline{S}$.
     Since $f$ is a separable polynomial of degree $d$, we have that
     $$
     \sum_{P\in \overline{S}} (e_P-1) =
     \begin{cases}
     d & \textit{ if }   d \textit{ is even,}\\
     d+1 & \textit{ if }  d \textit{ is odd.}
     \end{cases}
     $$
     As $g(\PP^1)=0$, the formula yields
     $$
     g(\overline{S})=
     \begin{cases}
     (d-2)/2 & \textit{ if }   d \textit{ is even,}\\
     (d-1)/2 & \textit{ if }  d \textit{ is odd.}
     \end{cases}
    $$
    
    From this one clearly sees that $g(\overline{S})=0$ if and only if $d=1,2$.
    Then the statement follows from Theorem~\ref{thm:CriterionForCurves}.
\end{proof}

\begin{remark}\label{r:OneVariable}
    In practice, Corollary~\ref{c:SqFreePol}, and Corollary~\ref{c:DegreeCriterion} allow us to almost immediately determine the rationalizability of a square root:
    assume we want to determine whether the square root $\sqrt{p/q}$ is rationalizable, with $p,q \in \CC [X]$ any two non-zero polynomials and at least one of them non-constant.
    Consider the polynomial $h:=p\cdot q \in \CC[X]$. Then $\sqrt{p/q}$ is rationalizable if and only if $h$ has at most two zeros with odd multiplicity.
\end{remark}

\subsection{Square roots in two variables} 
\label{sec:TwoVariables}

The criterion to decide whether the square root of a polynomial in one variable is rationalizable or not relies on the computation of the geometric genus of the associated curve.
For surfaces, the situation is analogous with the role of the genus being played by the {\em Kodaira dimension}.

\begin{remark}
Recall that the Kodaira dimension of a projective variety $Y$ is an integer
$\kappa=\kappa(Y)\in \{ -1,0,1,...,\dim Y\}$.
Please note that some sources use $-\infty$ instead of $-1$.
For the precise definition, we refer to~\cite[Chapter 6]{Har77}.
\end{remark}

The surfaces arising in our context are mostly not smooth.
The theory of singular surfaces is extremely rich, and a detailed discussion of the topic goes well beyond the scope of this paper; for an overview, see~\cite{AVGZ85}.
In this subsection, we will only deal with surfaces with mild isolated singularities,
that is, surfaces with at most {\em rational simple} singularities
(also called {\em DuVal} or {\em ADE} singularities).
For the definition and their properties, we refer the reader to~\cite[Chapter 15]{AVGZ85}.
(We want to stress out that a ``rational'' simple singularity does \textit{not} need to be defined over $\QQ$;  
``rational'' only means that by resolving it one gets an exceptional divisor birationally equivalent to $\PP^1$.)

Throughout this subsection, $f$ will always denote a non-constant squarefree polynomial in $\CC [X,Y]$ of degree $d$ (cf. Corollary~\ref{c:SqFreePol} and Remark~\ref{r:Squarefree}). 
Following Definition~\ref{def:AssHyp}, we denote by $\overline{V}$ the hypersurface associated to $\sqrt{f(X,Y)}$,
that is,
the projective surface in $\PP^3$ with coordinates $(z,x,y,w)$ defined by
$$
\overline{V}\colon z^{d-2}w^2-z^{d}f(x/z,y/z)=0.
$$
Finally, recall that the notions of being rational and unirational are equivalent for surfaces, cf. Remark~\ref{r:UniRatEqRat}.

\begin{lemma}\label{l:EqvUniKodDeg}
Let $f, d$, and $\overline{V}$ be defined as above and assume $\overline{V}$ is smooth or has at most rational simple singularities.
Then the following statements are equivalent.
\begin{enumerate}[label=(\roman*)]
    \item $\overline{V}$ has Kodaira dimension $-1$.
    \item the degree of $f$ is at most three, i.e., $d\leq 3$;
    \item $\overline{V}$ is unirational;
    \item $\sqrt{f}$ is rationalizable.
\end{enumerate}
\end{lemma}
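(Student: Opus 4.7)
The plan is to establish the four conditions as mutually equivalent. The link (iii) $\Leftrightarrow$ (iv) is immediate from Proposition~\ref{p:criterion} together with Remark~\ref{r:affineprojratequiv}, which identify rationalizability of $\sqrt{f}$ with unirationality of $\overline{V}$. For the rest I would pass to a smooth minimal model $\mu\colon\widetilde{V}\to\overline{V}$, which exists because rational simple singularities are canonical; since $\mu$ is crepant, $K_{\widetilde{V}}=\mu^*K_{\overline{V}}$, so plurigenera and Kodaira dimension are transported faithfully from $\overline{V}$ to $\widetilde{V}$.

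The main technical input is the claim that the hypothesis forces $d\leq 3$. In the affine chart $w=1$, the equation of $\overline{V}$ near $P=[0:0:0:1]$ reads $z^{d-2}=F(x,y,z)$ with $F$ homogeneous of degree $d$; for $d\geq 3$ the tangent cone at $P$ equals $\{z^{d-2}=0\}$, so $P$ has multiplicity $d-2$ on $\overline{V}$. Since every rational simple singularity on a surface is a double point, the cases $d\geq 5$ are ruled out by multiplicity $\geq 3$. The case $d=4$ requires a closer analysis: expanding the local equation in powers of $z$ as $z^2=F_0(x,y)+zF_1(x,y)+z^2F_2(x,y)+\ldots$, with $F_0$ the (nonzero) degree-$4$ form of $f$, and completing the square analytically in $z$ yields an equivalent local model $z'^2=G(x,y)$ with $G$ of order $\geq 4$. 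No ADE surface singularity has such a normal form, since $A_n$ requires a $y^2$ term and $D_n,E_n$ require a cubic term in $(x,y)$; hence the hypothesis also excludes $d=4$.

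It remains to verify (i), (iii), (iv) when $d\leq 3$. For $d\leq 2$, $\overline{V}$ is a quadric in $\mathbb{P}^3$, hence rational; for $d=3$, $\overline{V}$ is a cubic surface with at most ADE singularities, hence still rational (projection from any smooth point yields a birational map to $\mathbb{P}^2$). Rationality implies unirationality, giving (iii) and, via the first step, (iv). For Kodaira dimension, adjunction gives $K_{\overline{V}}\sim\mathcal{O}_{\overline{V}}(d-4)$; its crepant pullback to $\widetilde{V}$ has negative degree on any hyperplane section whenever $d\leq 3$, so all plurigenera vanish and $\kappa(\widetilde{V})=-1$, establishing (i).

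The main obstacle is the singularity analysis in the borderline case $d=4$: proving that the local model $z^2=F_0(x,y)+\ldots$ with $F_0$ of order exactly $4$ is never analytically equivalent to an ADE normal form relies crucially on the classification of simple surface singularities. A subsidiary point is verifying that the crepant identity $K_{\widetilde{V}}=\mu^*K_{\overline{V}}$ actually transfers the vanishing of plurigenera to the resolution, but this is a standard consequence of canonical singularities.
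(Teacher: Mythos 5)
Your proof takes a genuinely different route from the paper's. The paper proves the cycle (i)$\implies$(ii)$\implies$(iii)$\implies$(i) (plus (iii)$\iff$(iv) from Proposition~\ref{p:criterion}), using the degree--Kodaira-dimension relation for hypersurfaces with ADE singularities, Clebsch's theorem together with Corollary~\ref{c:SingProj} for cubics, and Castelnuovo plus the Enriques--Kodaira classification. You instead show that the standing hypothesis already forces $d\leq 3$: for $d\geq 5$ the point $(0:0:0:1)$ has multiplicity $d-2\geq 3$, and for $d=4$ the splitting-lemma normal form $z'^2=G(x,y)$ with $G$ of order $\geq 4$ (its quartic part being the leading form of $f$) is not of type $ADE$; then you check that (i), (iii), (iv) all hold for $d\leq 3$, so all four statements are true and hence equivalent. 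This is logically valid and consistent with the paper, which records precisely your observation in Remark~\ref{r:SingSurf} but keeps the implications separate; what the paper's arrangement buys is that (iii)$\iff$(iv), (iii)$\implies$(i) and (ii)$\implies$(i) are seen to hold without any hypothesis on the singularities (cf.\ Remark~\ref{r:NonSimpleSing}), which your argument does not recover---though the lemma as stated does not require it. (Terminology aside: what you construct is the minimal \emph{resolution}, not a minimal model; crepancy and the transfer of plurigenera are fine.)

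There is, however, one genuine error in a step: for $d=3$ you justify rationality of $\overline{V}$ by ``projection from any smooth point yields a birational map to $\PP^2$.'' Projection of a cubic surface from a \emph{smooth} point is generically two-to-one (a general line through the point meets the cubic in two further points), so it is not birational and does not even directly give unirationality; it is projection from a point of multiplicity $2$ that is birational. Hence your argument covers cubics with an ADE singular point but not smooth cubics. For the smooth case one must invoke Clebsch's theorem (as the paper does, see \cite{Cle66} or \cite[Corollary V.4.7]{Har77}) or the classical construction via two skew lines. With that citation substituted, the remainder of your argument---adjunction giving a negative canonical class for $d\leq 3$, crepancy at the ADE points, vanishing of plurigenera, hence $\kappa=-1$---goes through.
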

\begin{proof}
From Proposition~\ref{p:criterion}, we know that $iii)\iff iv)$.
We are left to show that $i), ii),$ and $iii)$ are equivalent.
Therefore, we will show that $i)\implies ii) \implies iii) \implies i)$.

{\em i) $\implies$ ii)} 
Assume $\overline{V}$ has at most rational simple singularities and $\kappa (\overline{V})=-1$.
Also, recall that $\overline{V}$ is a hypersurface in $\PP^3$ of degree $d$.
Since $\overline{V}$ has at most rational simple singularities, the canonical class is left unchanged after passing to the smooth model, 
and hence the Kodaira dimension of $\overline{V}$ is determined by its degree.
Hypersurfaces in $\PP^3$ of Kodaira dimension $-1$ have degree $d=1,2,3$ (in fact $\kappa(\overline{V})=0$ for $d=4$ and $\kappa(\overline{V})=2$ for $d\geq 5$, cf.~\cite[Section VI.1]{BHPV04}).

{\em ii) $\implies$ iii)}
 If $d=1,2$, then the statement follows from Corollary~\ref{c:SingProj} and Proposition~\ref{p:criterion}.
Assume $d=3$, then $\overline{V}$ is a cubic. 
If it is smooth, then it is rational (classic result by Clebsch, see~\cite{Cle66} for the original paper, or~\cite[Corollay V.4.7]{Har77} for a more modern statement and proof).
Rationality implies unirationality.
If $\overline{V}$ is singular, by assumption the singularities must be rational simple and hence of multiplicity $2=d-1$. 
Then the unirationality of $\overline{V}$ follows again from Corollary~\ref{c:SingProj} and Proposition~\ref{p:criterion}.

{\em iii) $\implies$ i)}
Assume $\overline{V}$ is unirational. As already noted, this is equivalent to saying that $\overline{V}$ is rational.
Then by the Enriques--Kodaira classification of surfaces it follows that $\kappa (\overline{V})=-1$ (see also~\cite[Theorem V.6.1]{Har77}). 
\end{proof}
\begin{remark}\label{r:SingSurf}
Unfortunately, Lemma~\ref{l:EqvUniKodDeg} is not very useful in practice: when $d\geq 4$,
the surface $\overline{V}$ has a non-simple singular point at $(0:0:0:1)$, so the result does not apply.
If $d=1,2$ we already know (unconditionally) that $\sqrt{f}$ is rationalizable  (cf. Proposition~\ref{p:criterion}).
The only interesting case is when $d=3$, as shown by the following corollary.
\end{remark}
\begin{corollary}\label{c:CubicSurface}
Assume $d=3$.
Then $\sqrt{f}$ is rationalizable if and only if $\overline{V}$ has no singular points of multiplicity $3$.
\end{corollary}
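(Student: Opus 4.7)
The strategy is to prove both implications separately. The sufficiency direction will be obtained by combining Corollary~\ref{c:SingProj} applied to any double point of $\overline{V}$ with the classical rationality of smooth cubic surfaces already used in Lemma~\ref{l:EqvUniKodDeg}. The necessity direction will be reduced, via a squarefreeness argument, to the one-variable criterion of Corollary~\ref{c:DegreeCriterion}.

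For sufficiency, suppose $\overline{V}$ has no point of multiplicity $3$. If $\overline{V}$ has a singular point $P$, then $P$ has multiplicity exactly $2 = d-1$, so Corollary~\ref{c:SingProj} directly yields the rationalizability of $\sqrt{f}$. Otherwise $\overline{V}$ is smooth and hence, vacuously, has at most rational simple singularities, so Lemma~\ref{l:EqvUniKodDeg} applies and again gives rationalizability.

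For necessity, suppose $\overline{V}$ admits a triple point $P$. Writing $G(z, x, y, w) := zw^2 - F(x, y, z)$, with $F$ the degree-$3$ homogenization of $f$ in $x, y, z$, the two second partial derivatives $\partial^2 G / \partial w^2 = 2z$ and $\partial^2 G / \partial z \partial w = 2w$ must vanish at $P$. This forces $P = (0 : x_0 : y_0 : 0)$ for some $(x_0, y_0) \neq (0, 0)$. Unwinding the remaining vanishing conditions at $P$ shows that all first and second partial derivatives of $F$ vanish at $(x_0 : y_0 : 0) \in \PP^2$, so the plane cubic $F$ itself has a triple point at that location and hence factors as a product of three linear forms concurrent at $(x_0 : y_0 : 0)$. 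Since $\deg f = 3$ forbids $z \mid F$, and squarefreeness of $f$ propagates to squarefreeness of $F$, these three linear factors are pairwise distinct and none of them equals $z$. A linear change of the coordinates $(X, Y)$ sending $(x_0 : y_0 : 0)$ to $(1 : 0 : 0)$ then turns the three concurrent lines into horizontal lines $Y = c_i$, so that $f(X, Y)$ becomes proportional to $(Y - c_1)(Y - c_2)(Y - c_3)$ and depends only on $Y$. Since linear changes of coordinates are $\CC$-algebra automorphisms of $\CC[X, Y]$ and thus preserve rationalizability, Corollary~\ref{c:Cones} reduces the question to the univariate polynomial $f(Y)$, and Corollary~\ref{c:DegreeCriterion} then rules out rationalizability because $\deg f = 3 > 2$.

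The most delicate ingredients are concentrated in the necessity direction: the partial-derivative computation constraining the triple point to the line $\{z = w = 0\}$, and the chain of observations propagating squarefreeness from $f$ to $F$ and ultimately to three pairwise distinct concurrent linear factors. The sufficiency is essentially bookkeeping once the dichotomy smooth versus singular has been drawn.
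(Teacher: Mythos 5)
Your proof is correct, and the \emph{necessity} direction (triple point $\Rightarrow$ not rationalizable) takes a genuinely different route from the paper's. The paper simply notes that a cubic surface with a triple point is the cone over a plane cubic curve, hence ruled and not unirational, and invokes Proposition~\ref{p:criterion}. You instead track down what a triple point on $\overline{V}$ forces on $f$: via the partial-derivative computation you place the triple point on the line $\{z=w=0\}$, deduce that $F$ has a triple point and so splits as three concurrent lines, use squarefreeness of $f$ to see the lines are distinct and that none of them is the line at infinity, and after a linear change in $(X,Y)$ conclude that $f$ depends on a single variable; Corollaries~\ref{c:Cones} and~\ref{c:DegreeCriterion} then finish. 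This is more explicit than the paper's argument and makes visible the role of squarefreeness: the paper's claim that the cone is ``not (uni)rational'' tacitly requires the plane cubic one is coning over to have positive genus, which is exactly the content of the elliptic curve you recover through the reduction to the univariate case. The sufficiency direction (no triple point $\Rightarrow$ rationalizable, via the dichotomy smooth/double point and Corollary~\ref{c:SingProj} together with Lemma~\ref{l:EqvUniKodDeg}) coincides with the paper.
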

\begin{proof}
If $\overline{V}$ has a singular point of multiplicity $3$,
then it is the projective cone over a projective plane cubic curve (with the singular point being the vertex of the cone).
Hence $\overline{V}$ is ruled and not (uni)rational and so, by Proposition~\ref{p:criterion}, it follows that $\sqrt{f}$ is not rationalizable.

Conversely, assume that $\overline{V}$ has no singular points of multiplicity $3$.
As $\overline{V}$ is a cubic, this means that it is either smooth or has singular points of multiplicity $2$.
In the smooth case, the statement follows from Lemma~\ref{l:EqvUniKodDeg};
in the singular case, from Corollary~\ref{c:SingProj}.
\end{proof}
\begin{remark}\label{r:NonSimpleSing}
The assumption in Lemma~\ref{l:EqvUniKodDeg} for $\overline{V}$ to have at most rational simple singularities is strictly necessary, as shown by Corollary~\ref{c:CubicSurface}.
We have seen that if $d=3$ and $\overline{V}$ has a point of multiplicity $3$,
then $\overline{V}$ is a cone over a cubic curve, which is ruled but not (uni)rational, providing a counterexample to $ii)\implies iv)$.
In~\cite{IN04}, one can find a ruled quartic surface having Kodaira dimension $-1$ while not being (uni)rational, hence a counterexample to $i)\implies ii)$.
The implications $iii)\iff iv)$, $iii)\implies i)$, and $ii)\implies i)$ hold unconditionally.
\end{remark}

The guaranteed existence of non-simple singular points on $\overline{V}$ (cf. Remark~\ref{r:NonSimpleSing}) prevents us from getting much information about the rationalizability of $\sqrt{f}$.
However, we can also use the approach of Subsection~\ref{sec:DoubleCover}, which turns out to be much more suitable for the case of square roots in two variables, as shown in the theorem below.
Let $\overline{S}$ denote the double cover associated to $\sqrt{f}$ (cf. Definition~\ref{def:AssDoubleCover}).

\begin{theorem}\label{thm:DegreeCriterion}
Assume that $\overline{S}$ has at most rational simple singularities. 
Then $\sqrt{f}$ is rationalizable if and only if $d\leq 4$.
\end{theorem}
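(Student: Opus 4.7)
The plan is to translate the statement into a question about the rationality of the surface $\overline{S}$ and settle it with an adjunction computation on the weighted projective embedding of $\overline{S}$, combined with Castelnuovo's rationality criterion. First, I would combine Proposition~\ref{prop:Birational}, Proposition~\ref{p:criterion}, and the equivalence of unirationality and rationality for complex surfaces recalled in Remark~\ref{r:UniRatEqRat} to reduce the statement to showing that, under the hypothesis on the singularities, $\overline{S}$ is rational if and only if $d\leq 4$. I would then pass to the minimal resolution $\pi\colon \tilde{S}\to\overline{S}$; the fact that Du Val singularities are at once canonical (so $K_{\tilde{S}}=\pi^* K_{\overline{S}}$) and rational (so $h^i(\mathcal{O}_{\tilde{S}}) = h^i(\mathcal{O}_{\overline{S}})$ for $i\geq 1$) reduces the computation of the plurigenera and irregularity of $\tilde{S}$ to a computation on $\overline{S}$ itself.

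Set $r := \lceil d/2\rceil$. I would view $\overline{S}$ as a hypersurface of weighted degree $2r$ in the weighted projective space $\PP(1,1,1,r)$ and apply adjunction to obtain
\begin{equation*}
K_{\overline{S}} \sim (r-3)\,H|_{\overline{S}},
\end{equation*}
where $H$ denotes the weight-$1$ hyperplane class. The class $H|_{\overline{S}}$ is ample, being the pullback of a line in $\PP^2$ under the degree-$2$ projection $\phi\colon \overline{S}\to \PP^2$, $(s,y_1,y_2,u)\mapsto (s,y_1,y_2)$. From the standard splitting $\phi_*\mathcal{O}_{\overline{S}} \cong \mathcal{O}_{\PP^2}\oplus \mathcal{O}_{\PP^2}(-r)$ I would read off $q(\tilde{S}) = h^1(\mathcal{O}_{\overline{S}}) = 0$.

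The case analysis is then immediate. If $d\leq 4$, i.e.\ $r\leq 2$, then $K_{\overline{S}}$ is anti-ample, so $h^0(mK_{\tilde{S}})=h^0(mK_{\overline{S}})=0$ for every $m\geq 1$; together with $q(\tilde{S})=0$, Castelnuovo's rationality criterion forces $\tilde{S}$ (hence $\overline{S}$) to be rational. If instead $d\geq 5$, i.e.\ $r\geq 3$, then $K_{\overline{S}}$ is either trivial (when $r=3$, the K3 case) or a positive multiple of the ample class $H|_{\overline{S}}$ (when $r\geq 4$); in both cases $p_g(\tilde{S})\geq 1$, which rules out rationality.

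The main obstacle I anticipate is the careful bookkeeping around the singularities. The rational simple singularity hypothesis is precisely what makes the passage from $\overline{S}$ to $\tilde{S}$ transparent: it simultaneously guarantees that the canonical class pulls back without discrepancy and that the coherent cohomology of the structure sheaf is preserved. Without it, both the adjunction computation on $\overline{S}$ and the vanishing $q(\tilde{S}) = 0$ would require correction terms coming from the exceptional locus, and the clean correspondence between the arithmetic of $\overline{S}\subset\PP(1,1,1,r)$ and the birational invariants of $\tilde{S}$ that drives the whole argument would break down.
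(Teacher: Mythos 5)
Your argument is correct, and it reaches the same reduction as the paper (rationalizable $\iff$ $\overline{S}$ unirational $\iff$ rational, via Proposition~\ref{p:criterion}, Proposition~\ref{prop:Birational}, Remark~\ref{r:UniRatEqRat}, and the fact that Du Val singularities are canonical and rational), but it implements the key step differently. The paper settles the degree ranges by citation: for $d=3,4$ it invokes Koll\'ar's result that the resolution is birational to a del Pezzo surface of degree $2$, hence rational by Manin; for $d\geq 5$ it quotes the Kodaira-dimension tables for double planes (BHPV) and then applies the unconditional implication of Lemma~\ref{l:EqvUniKodDeg}. You instead compute everything by hand: $K_{\overline{S}}\sim(r-3)H$ by adjunction on the weighted hypersurface (equivalently the double-cover formula $K=\phi^*(K_{\PP^2}+rH)$), $q=0$ from the splitting $\phi_*\mathcal{O}_{\overline{S}}\cong\mathcal{O}_{\PP^2}\oplus\mathcal{O}_{\PP^2}(-r)$, then Castelnuovo's criterion for $d\leq 4$ and $p_g\geq 1$ for $d\geq 5$. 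Your route is more self-contained and makes the role of the singularity hypothesis cohomologically explicit; the paper's is shorter and additionally identifies the birational models (degree-$2$ del Pezzo, K3, general type). Two small points you should make explicit: $\overline{S}$ misses the singular point $(0{:}0{:}0{:}1)$ of $\PP(1,1,1,r)$ (plugging it into the equation gives $u^2=0$), so adjunction and the finiteness of the projection to $\PP^2$ are legitimate; and for $r\geq 4$ ampleness of $(r-3)H|_{\overline{S}}$ alone does not produce sections --- get $p_g\geq 1$ by restricting degree-$(r-3)$ monomials in $s,y_1,y_2$, or read $h^0(K_{\overline{S}})=h^0(\PP^2,\mathcal{O}(r-3))$ off the pushforward splitting; finally, for $d\geq 5$ conclude non-rationalizability by combining $p_g\geq 1$ with the equivalence of unirationality and rationality for surfaces, exactly as in your opening reduction.
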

\begin{proof}
From Remark~\ref{r:UniRatEqRat} and Proposition~\ref{p:criterion}, $\sqrt{f}$ is rationalizable if and only if $\overline{S}$ (or, equivalently, $\overline{V}$) is unirational.

So now assume $\overline{S}$ has at most rational simple singularities. 
Let $T$ be the desingularization of $\overline{S}$.
By a result in~\cite{Hir64}, such a $T$ exists and is birationally equivalent to $\overline{S}$.
This means that $\overline{S}$ is (uni)rational if and only $T$ is.
As $\overline{S}$ has only rational simple singularities, the canonical divisor of $T$ equals the canonical divisor of $\overline{S}$. 
Hence, we can use the degree of $f$ to compute the canonical divisor of $\overline{S}$ and hence the Kodaira dimension of $T$. 

If $d=1,2$ then $\sqrt{f}$ is rationalizable by Proposition~\ref{c:SingProj}.

If $d=3,4$, then $T$ (and hence $\overline{S}$)  is birationally equivalent to a del Pezzo surface of degree $2$ (cf.~\cite[Theorem III.3.5]{Kol96}, where the degree of a del Pezzo surface is defined to be the self-intersection of the canonical divisor of the surface; notice that it does not need to coincide with the degree of the defining polynomial).
Del Pezzo surfaces are rational (cf.~\cite[Theorem IV.24.4]{Man86}).

We are left to show that if $d>4$, then $\sqrt{f}$ is not rationalizable. 
In order to see this, we prove that $\overline{S}$ is not (uni)rational.
Since $d>4$, we have that $T$ and, hence, $\overline{S}$ have Kodaira dimension greater than or equal to $0$ 
(in fact, their Kodaira dimension is $0$ if $d=5,6$ and $2$ if $d\geq 7$, cf.~\cite[Section V.22]{BHPV04}).
As $\overline{S}$ and $\overline{V}$ are birationally equivalent, they have the same Kodaira dimension.
Then Lemma~\ref{l:EqvUniKodDeg} implies that $\sqrt{f}$ is not rationalizable, proving the statement.
\end{proof}

\begin{remark}
At a first glance, Theorem~\ref{thm:DegreeCriterion} and  Lemma~\ref{l:EqvUniKodDeg}  contradict each other, as the square root of a polynomial $f$ of degree $d=4$ should be non-rationalizable, according to Lemma~\ref{l:EqvUniKodDeg}, but also rationalizable, 
according to Theorem~\ref{thm:DegreeCriterion}.
This contradiction does, however, not really exist:
as already noted in Remark~\ref{r:NonSimpleSing}, the hypersurface $\overline{V}$ associated to $f$ always has a non-simple singular point and, therefore, one cannot apply the implication $i)\implies ii)$ in Lemma~\ref{l:EqvUniKodDeg} (cf. Remark~\ref{r:NonSimpleSing}) needed to conclude that $\sqrt{f}$ is not rationalizable.
\end{remark}

\begin{remark}
In order to apply Theorem~\ref{thm:DegreeCriterion},
one needs to study the singularities of~$\overline{S}$.
To simplify this task, we wrote a {\tt Magma}~(cf.~\cite{Magma}) function.
For the source code of the function and a detailed explanation of how to apply it, see \cite{Besier:GitHub}.
Alternatively, one can use the {\tt classify2.lib} library of {\tt Singular} (cf.~\cite{singular}).
Both {\tt Magma} and {\tt Singular}  come with a free online calculator that one can use to perform the singularity classification.
\end{remark}

\begin{example}
In~\cite{Festi:092018}, the rationalizability of the square root
\begin{equation}\label{eq:Bhabha}
\sqrt{\frac{(X+Y)(1+XY)}{X + Y - 4XY + X^2Y + XY^2}}
\end{equation}
coming from the Bhabha scattering (cf. \cite{Henn:2013woa}) is studied.
Using Corollary~\ref{c:SqFreePol}, one immediately sees that this is equivalent to study the unirationality of the double cover $\overline{S}$ associated to the square root
$$
\sqrt{(X+Y)(1+XY)(X + Y - 4XY + X^2Y + XY^2)}.
$$
The surface $\overline{S}$ has only simple singularities as one can check either by hand or using our code, cf.~\cite{Besier:GitHub}.
Therefore, from Theorem~\ref{thm:DegreeCriterion}, it follows that the square root~\eqref{eq:Bhabha} is not rationalizable. 
\end{example}

\subsection{Square roots in three or more variables}
\label{sec:MoreVariables}

In the previous subsections, we have seen that, if the argument of the square root is a polynomial in one or two variables, then we can often determine whether the square root is rationalizable or not by investigating the unirationality of the associated varieties.

Unfortunately, the situation becomes dramatically more complicated as the number of variables grows.
Already in the case of three variables, the previous approaches will not work anymore as we lack easy criteria to assess the unirationality of threefolds.
The same is true for varieties of even higher dimension.

In Subsection~\ref{sec:ProjSurf}, we have already seen a partial result to deduce rationalizability in any number of variables (cf. Corollary~\ref{c:SingProj}). 
In this subsection, we give a result that can help practitioners in studying the rationalizability of a square root of a {\em homogeneous} polynomial.
More precisely, we will show how to reduce the study of the square root of a  homogeneous polynomial in $n$ variables to the study of a square root of a (non-homogeneous) polynomial in $n-1$ variables.
This process is particularly interesting when $n=3$,
as we can then use all the results of Subsection~\ref{sec:TwoVariables}.

In what follows we will always assume that $f$ is a non-constant squarefree polynomial of degree $d$ in $R=k[X_1,...,X_n]$;
recall that we fixed $k=\CC$.
We use $\overline{V}$ to denote the hypersurface associated to $\sqrt{f}$ (see Definition~\ref{def:AssHyp}).

\begin{proposition}\label{p:Ruled}
Let $f$ and $d$ be defined as above and 
let $F$ be the homogeneization of $f$ in $k[x_0,x_1,...,x_n]$ with $X_i=x_i/x_0$ for $i=1,...,n$,
that is, $F=x_0^df(x_1/x_0,...,x_n/x_0)$.

The following holds: 
\begin{enumerate}
    \item if $d$ is even, then $\sqrt{f}$ is rationalizable if and only if  $\sqrt{F}$ is;
    \item if $d$ is odd, then $\sqrt{f}$ is rationalizable if and only if  $\sqrt{x_0F}$ is.
\end{enumerate}
\end{proposition}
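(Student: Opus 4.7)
My plan is to treat both cases uniformly by setting $G := F$ when $d$ is even and $G := x_0 F$ when $d$ is odd; in either case $\deg G = 2m$ is even (with $m = d/2$ or $m = (d+1)/2$), and the key identity
$$
G(x_0, x_1, \ldots, x_n) = x_0^{2m}\, f(x_1/x_0, \ldots, x_n/x_0)
$$
holds. Denote by $V_G \subset \mathbb{A}^{n+2}$ and $V_f \subset \mathbb{A}^{n+1}$ the affine hypersurfaces associated to $\sqrt{G}$ and $\sqrt{f}$ respectively (cf.~Definition~\ref{def:AffAssHyp}). The strategy is to produce a birational equivalence $V_G \sim V_f \times \mathbb{A}^1$ and then conclude via Proposition~\ref{p:criterion} together with the fact (already invoked in the proof of Corollary~\ref{c:Cones}) that unirationality is stable under product with an affine line.

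As a preliminary step I would verify that $G$ is squarefree, so that the framework of Subsection~\ref{sec:ProjSurf} applies. Since $\deg f = d$, the homogenization $F$ cannot be divisible by $x_0$; if $F$ had a squared homogeneous factor $A^2$ with $\deg A > 0$, then setting $x_0 = 1$ would exhibit $A(1, X_1, \ldots, X_n)^2$ as a square factor of the squarefree polynomial $f$, forcing $A = c\, x_0^{\deg A}$ and contradicting $x_0 \nmid F$. Hence $F$, and therefore $x_0 F$, is squarefree.

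Next I would construct the birational equivalence explicitly. On the open set $\{x_0 \neq 0\} \subset V_G$, the substitution $X_i := x_i/x_0$ and $W' := W/x_0^m$ transforms the defining equation $W^2 = G$ into $W'^2 = f(X_1, \ldots, X_n)$, while the coordinate $x_0$ survives as a free parameter; the inverse is $x_i = x_0 X_i$, $W = x_0^m W'$. This yields an isomorphism $V_G \cap \{x_0 \neq 0\} \cong V_f \times \mathbb{G}_m$, whence $V_G$ is birationally equivalent to $V_f \times \mathbb{A}^1$.

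Finally I would chain the equivalences: by Proposition~\ref{p:criterion}, $\sqrt{G}$ is rationalizable if and only if $V_G$ is unirational, if and only if $V_f \times \mathbb{A}^1$ is unirational (birational invariance), if and only if $V_f$ is unirational, if and only if $\sqrt{f}$ is rationalizable. The only genuinely substantive input is the equivalence between the unirationality of $V_f$ and of $V_f \times \mathbb{A}^1$, which I expect to be the main conceptual point — though it is the same principle already used implicitly in the proof of Corollary~\ref{c:Cones}, and one direction is immediate while the converse follows by composing a dominant parametrization of $V_f \times \mathbb{A}^1$ with the projection onto $V_f$.
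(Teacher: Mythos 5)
Your proof is correct, but it takes a genuinely different route from the paper's. The paper works entirely at the level of field homomorphisms: given a rationalizing substitution $\Phi$ for $\sqrt{F}$ (or $\sqrt{x_0F}$), it specializes $x_0$ to a suitable constant $c\in k$ (using that $\CC$ is infinite) to obtain a substitution rationalizing $\sqrt{f}$, and conversely it homogenizes a rationalizing substitution $\phi$ for $\sqrt{f}$, clearing the common denominator of the $\phi_i$, to build one for $\sqrt{F}$; in particular it never invokes Proposition~\ref{p:criterion}. You instead pass through the associated hypersurfaces: after verifying that $G$ is squarefree (your homogenization argument is correct), you identify $V_G\cap\{x_0\neq 0\}$ with $V_f\times\mathbb{G}_m$, so $V_G$ is birational to $V_f\times\mathbb{A}^1$, and conclude via Proposition~\ref{p:criterion}. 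Your route is shorter and more conceptual — it makes transparent that $V_G$ is essentially a cone over $V_f$ — whereas the paper's computation is more explicit and actually produces the rationalizing change of variables in each direction, which is what practitioners ultimately need. Two small points to tighten: (i) under Definition~\ref{d:UniRat} unirationality asks for a dominant map from $\PP^N$ with $N=\dim$, so composing a parametrization of $V_f\times\mathbb{A}^1$ with the projection only yields a dominant map from $\PP^{n+1}$ onto $\overline{V_f}$; to match the definition one should either restrict to a general linear subspace of the right dimension (standard over $\CC$) or use the function-field formulation of Remark~\ref{r:AlgRat}, exactly the step the paper itself leaves implicit in Corollary~\ref{c:Cones}, so this is consistent with the paper's own level of rigor; (ii) the birational equivalence also uses that $V_G$ is irreducible (i.e., $W^2-G$ is irreducible, which follows since the squarefree non-constant $G$ is not a square), so that the open set $\{x_0\neq 0\}\cap V_G$ is dense — worth a sentence, but harmless.
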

\begin{proof}
In what follows, let $Q'$ be the field $k(x_0,x_1,...,x_n)$ and recall $Q:=k(X_1,...,X_n)$.
\begin{enumerate}
    \item By assumption $d$ is even; write $d=2r$.
    Assume $\sqrt{F}$ is rationalizable. Then there is a non-constant homomorphism of $k$-algebras $\Phi\colon Q^\prime\to Q^\prime$ such that
$$
\Phi (F) = F (\Phi_0,...,\Phi_n)=H^2,
$$
where $\Phi_i:=\Phi(x_i)$ and $H\in Q^\prime$.
As $F=x_0^df(x_1/x_0,...,x_n/x_0)$ we have
\begin{align*}
    H^2=\Phi(F)&=\Phi(x_0^df(x_1/x_0,...,x_n/x_0) )\\
           &=\Phi (x_0)^d \Phi (f(x_1/x_0,...,x_n/x_0) )\\
           &=\Phi_0^d f(\Phi_1/\Phi_0,...,\Phi_n/\Phi_0)\; ,\\
\end{align*}
from which it follows that
$$
f(\Phi_1/\Phi_0,...,\Phi_n/\Phi_0)=\frac{H^2}{\Phi_0^d}= \left(\frac{H}{\Phi_0^r}\right)^2
$$
is a square in $Q^\prime$.
Notice that,  as $\Phi$ is a homomorphism of fields, it is injective and hence $\Phi_0$ is non-zero.
Also, as $k$ is algebraically closed, it is infinite and hence there exists an element $c\in k$ such that
the rational expressions $H(c,x_1,...x_n)$ and 
 $\Phi_i(c,x_1,...,x_n)$, for $i=0,1,...,n$,  are well-defined and non-zero.
Then the following homomorphism of $k$-algebras is well defined.
\begin{align*}
    \phi\colon Q&\to Q\\
                X_i &\mapsto \frac{\Phi_i(c,X_1,...,X_n)}{\Phi_0(c,X_1,...,X_n)} \text{ for } i=1,...,n
\end{align*}
It is easy to see that $\phi$ sends $f$ to a square, indeed
\begin{align*}
    \phi(f)&=f\left(\frac{\Phi_1(c,X_1,...,X_n)}{\Phi_0(c,X_1,...,X_n)},...,\frac{\Phi_n(c,X_1,...,X_n)}{\Phi_0(c,X_1,...,X_n)}\right)\\
    &=(f(\Phi_1/\Phi_0,...,\Phi_n/\Phi_0))(c,X_1,...,X_n)\\
    &=\frac{H^2}{\Phi_0^d}(c,X_1,...,X_n)\\
    &=\left(\frac{H(c,X_1,...,X_n)}{\Phi_0^r(c,X_1,...,X_n)}\right)^2.
\end{align*}
Hence, $\sqrt{f}$ is rationalizable.

Conversely,
assume that $\sqrt{f}$ is rationalizable.
Then, there is a non-constant homomorphism $\phi\colon Q\to Q$ of $k$-algebras such that 
$$
\phi(f)=f(\phi_1,...,\phi_n)=h^2
$$ 
for some $h\in Q$, where $\phi_i=\phi (X_i)\in Q$ for $i=1,...,n$.
All the $\phi_i$'s can be expressed as a ratio of two polynomials; taking the least common multiple $\varphi_0$ of the denominators, we can write
$$
\phi_i=\frac{\varphi_i}{\varphi_0}
$$
for $i=1,...,n$ (notice that $\varphi_0$ is fixed).
Then, as above, we can find $n+1$ polynomials $\Phi_0,\Phi_1,...,\Phi_n\in k[x_0,x_1,...,x_n]$ (homogeneous and of the same degree) such that the following equalities hold in $Q^\prime$:
$$
\frac{\varphi_i (x_1/x_0,...,x_n/x_0)}{\varphi_0(x_1/x_0,...,x_n/x_0)}=\frac{\Phi_i}{\Phi_0},
$$
for every $i=1,...,n$.
Define the homomorphism $\Phi$ as follows.
\begin{align*}
    \Phi\colon Q^\prime&\to Q^\prime\\
                x_i &\mapsto \Phi_i \text{ for } i=0,1,...,n
\end{align*}
Then $\Phi$ sends $F$ to a square, concluding the proof:
\begin{align*}
    \Phi (F) =& \Phi (x_0^d f(x_1/x_0,...,x_n/x_0))\\
             =& \Phi (x_0)^d \Phi (f(x_1/x_0,...,x_n/x_0))\\
             =& \Phi_0^d f(\Phi (x_1/x_0),...,\Phi (x_n/x_0)))\\
             =& \Phi_0^d f(\Phi_1/\Phi_0,...,\Phi_n/\Phi_0)\\
             =& \Phi_0^d f\left( \frac{\varphi_1 (x_1/x_0,...,x_n/x_0)}{\varphi_0(x_1/x_0,...,x_n/x_0)},..., \frac{\varphi_n (x_1/x_0,...,x_n/x_0)}{\varphi_0(x_1/x_0,...,x_n/x_0)}\right)\\
             =& \Phi_0^d \, \left(f\left( \frac{\varphi_1}{\varphi_0},...,\frac{\varphi_n}{\varphi_0}\right)\right) (x_1/x_0,...,x_n/x_0)\\
             =&\Phi_0^d\,(f(\phi_1,...,\phi_n))(x_1/x_0,...,x_n/x_0)\\
             =&\Phi_0^d h(x_1/x_0,...,x_n/x_0)^2\\
             =&(\Phi_0^r h(x_1/x_0,...,x_n/x_0))^2.
\end{align*}

\item Assume that $d$ is odd and write $d=2r-1$.
Then $x_0F$ has degree $2r$, and the proof goes as above.

\end{enumerate}
\end{proof}

\begin{remark}
Proposition~\ref{p:Ruled} is particularly useful in the case of square roots in three variables.
Indeed, if $n=3$ and $f$ happens to be homogeneous, 
then one can regard $f$ as the homogenization of a polynomial $g$ in two variables.
Then $\sqrt{f}$ is rationalizable if and only if the hypersurface (or, equivalently, the double cover) associated to $\sqrt{g}$ 
(or to $\sqrt{x_0g}$, if $d$ is odd and $x_0$ is the homogenizing variable) is unirational.
Subsequently, one can apply the methods of Subsection~\ref{sec:TwoVariables}.
\end{remark}

\begin{example}
With this example, we show that Proposition~\ref{p:Ruled} can be helpful even with square roots in two variables.
Consider the square root $\sqrt{F}$ with $F=X_1^4+X_2^4$ and let $\overline{S}$ be its associated double cover in $\PP(1,1,1,2)$ with coordinates $s,y_1,y_2,u$, that is,
$$
\overline{S}\colon u^2=y_1^4+y_2^4.
$$
One can see that $\overline{S}$ has a non-simple (elliptic) singularity in $(1:0:0:0)$ and therefore we cannot use Theorem~\ref{thm:DegreeCriterion} to conclude that $\sqrt{f}$ is rationalizable.

Nevertheless, $F$ can be seen as the homogenization of the polynomial $G=X^4+1$ and, by Proposition~\ref{p:Ruled}, $\sqrt{F}$ is rationalizable if and only if $\sqrt{G}$ is (as the degree of $G$ is $4$, even).
As $G$ is a polynomial in one variable, we can then 
apply Theorem~\ref{thm:CriterionForCurves} to conlude that $\sqrt{G}$, and hence also $\sqrt{F}$, is not rationalizable.  
\end{example}

\begin{example}
Fermat quartics give us also another interesting example.
Consider the square root $\sqrt{F}$ with $F=X_1^4+X_2^4+X_3^4$.
Notice that $F$ has degree $4>2$ so we cannot conclude right away from Corollary~\ref{c:SingProj} that $\sqrt{F}$ is rationalizable.
The associated hypersurface is $\overline{V}:=\{ z^2w^2-x_1^4-x_2^4-x_3^4=0\}$
and has two (non-simple) singular points, $(1:0:0:0:0)$ and $(0:0:0:0:1)$, both of multiplicity $2$. In particular, no triple points.
So again, we cannot apply Corollary~\ref{c:SingProj} to conclude that it is rationalizable.

Nevertheless, we notice that $F$ is homogeneous and it can be seen as the homogenization of $f=X^4+Y^4+1$ in $k[X_1,X_2,X_3]$ with $X=X_1/X_3$ and $Y=X_2/X_3$.
Let $\overline{S}$ denote the double cover associated to $\sqrt{f}$.
It is easy to see that $\overline{S}$ is smooth and so, in particular, has no non-simple singularities.
From Theorem~\ref{thm:DegreeCriterion} it follows that $\sqrt{f}$ is rationalizable. 
Hence, by Proposition~\ref{p:Ruled}, so is $\sqrt{F}$.
\end{example}

\subsection{Proving non-rationalizability of a set of square roots} 
\label{sec:Alphabets}

In the context of Feynman integral computations, it is often not enough to study the rationalizability of a single square root. 
Instead, practitioners are usually interested in whether or not several different square roots can be rationalized simultaneously.
However, we will see that the non-rationalizability of a set of square roots (also called \textit{alphabet}) 
can often be deduced from the non-rationalizability of a single square root so that many of our previous methods can also be applied in this more general context. 

As before, we fix $k=\mathbb{C}$, 
and write $R=k[X_1,...,X_n]$ for the ring of polynomials and $Q=\mathrm{Frac}\;R$ for its field of fractions.

\begin{definition}
Let $f_1,...,f_r$ be polynomials in $R$.
An alphabet $\{ \sqrt{f_1},...,\sqrt{f_r}\}$ is called {\em rationalizable}
if there is a homomorphism of $k$-algebras $\phi\colon Q\to Q$ such that $\; \phi(f_i)=h_i^2$ for some $h_i\in Q$, where $i=1,...,r$.
\end{definition}

\begin{remark}
From the definition it immediately follows that if an alphabet $\mathcal{A}^\prime$ is non-ra\-tio\-na\-li\-za\-ble, 
then every alphabet $\mathcal{A}\supseteq \mathcal{A}^\prime$ containing $\mathcal{A}^\prime$ is also non-rationalizable.

This remark is particularly useful when $\mathcal{A}$ has a subset $\mathcal{A}^\prime$ containing only square roots of polynomials in fewer variables, that is, after possibly reordering the variables and the polynomials,
$$
\mathcal{A}^\prime=\Big\{\sqrt{f_1},...,\sqrt{f_s}\Big\}  
$$
with $s<r$ and $f_1,...,f_s\in k[X_1,...,X_m]\subset R$, $m<n$.
Then one can try to disprove the rationalizability of $\mathcal{A}$ by disproving the rationalizability of $\mathcal{A}^\prime$.
The rationalizability of $\mathcal{A}^\prime$ as alphabet of square roots of polynomials in $R$ is equivalent to its rationalizability as square roots of polynomials in $k[X_1,...,X_m]$ (Corollary~\ref{c:Cones}).
The latter task is easier because of the fewer variables involved and, in particular, if $m=1,2$, one can then apply the criteria given in Subsections~\ref{sec:OneVariable} and~\ref{sec:TwoVariables}.
\end{remark}

\begin{proposition}\label{p:MoreRoots}
If the alphabet $\{ \sqrt{f_1},...,\sqrt{f_n}\}$ is rationalizable then, for every non-empty subset $J\subseteq \{1,...,n\}$, the square root 
\begin{equation}\label{eq:productRoot}
\sqrt{\prod_{j\in J} f_j}
\end{equation}
is rationalizable.
\end{proposition}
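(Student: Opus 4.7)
The plan is to argue directly from the definition of rationalizability: a single rationalizing homomorphism for the full alphabet will automatically serve as a rationalizing homomorphism for every product-square-root built from a non-empty subset of the $f_j$'s. In other words, no new map needs to be constructed; the same $\phi$ works.

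More concretely, I would start by unpacking the hypothesis. Since $\{\sqrt{f_1},\ldots,\sqrt{f_n}\}$ is rationalizable, there exists a homomorphism of $k$-algebras $\phi\colon Q\to Q$ together with elements $h_1,\ldots,h_n\in Q$ such that $\phi(f_i)=h_i^2$ for every $i=1,\ldots,n$. Fix any non-empty $J\subseteq\{1,\ldots,n\}$. Because $\phi$ is a ring homomorphism it commutes with finite products, so
\[
\phi\!\left(\prod_{j\in J} f_j\right)=\prod_{j\in J}\phi(f_j)=\prod_{j\in J}h_j^2=\left(\prod_{j\in J} h_j\right)^{\!2},
\]
and the right-hand side is manifestly a square in $Q$. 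Hence the same $\phi$ rationalizes $\sqrt{\prod_{j\in J} f_j}$, which is exactly what needs to be shown.

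There is essentially no obstacle here: the statement is a formal consequence of the multiplicativity of $\phi$ and of the fact that a product of squares is a square. The only minor point worth flagging is that we need $\phi$ to be a genuine $k$-algebra homomorphism (not merely a $k$-linear map) so that it respects the product $\prod_{j\in J}f_j$; this is built into Definition~\ref{d:rationalizability}, and the accompanying remark after that definition already guarantees that $\phi$ is injective, so no degeneracy issue arises (in particular each $h_j$ is non-zero whenever $f_j$ is, though we do not even need this for the statement). No appeal to the geometric machinery of Subsections~\ref{sec:ProjSurf}--\ref{sec:DoubleCover} is required.
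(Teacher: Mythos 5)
Your proof is correct and essentially identical to the paper's: both take the rationalizing homomorphism $\phi$ for the full alphabet and observe that, by multiplicativity, it sends $\prod_{j\in J} f_j$ to $\bigl(\prod_{j\in J} h_j\bigr)^2$, a square. Nothing further is needed.
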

\begin{proof}
By definition, if $\{ \sqrt{f_1},...,\sqrt{f_n}\}$ is rationalizable, then there exists a non-constant $k$-algebra homomorphism $\phi\colon Q\to Q$ such that, for $i=1,...,n$, the map $\phi$ sends $f_i$ to $h_i^2$, for some $h_i\in Q$.
In particular, $\phi(f_j)=h_j^2$ for every $j\in J$.
Hence, $\phi (\prod_{j \in J}f_j)=(\prod_{j\in J}h_j)^2$,
proving the statement.
\end{proof}
\begin{remark}\label{r:AlphabetStrategy}
The above straightforward proposition allows us to prove that a given alphabet is not rationalizable by showing that at least one square root of the form \eqref{eq:productRoot} is not rationalizable. 

At the moment, we do not know whether the converse statement of Proposition~\ref{p:MoreRoots} holds or not, 
as even if the product $\sqrt{\prod_{j\in J} f_j}$ is rationalizable for every $J\subseteq \{1,...,n\}$,
the rationalizing morphisms $\phi_J$ do not need to be a priori all equal.
Nevertheless, one can often prove the rationalizability of a given alphabet by providing an explicit variable change that rationalizes all of its square roots simultaneously.
To find such a variable change, one can try to apply the elementary strategy mentioned in Remark~\ref{r:StartingPoint}, which works for many practical examples.
\end{remark}

Let us apply Proposition~\ref{p:MoreRoots} to the alphabets of some recent Feynman integral computations.
\begin{example}
By Corollary~\ref{c:DegreeCriterion}, square roots of a squarefree polynomial in one variable of degree $d>2$ are not rationalizable.
Such square roots occurred in many Feynman integral computations of the last decade, see  \cite{Laporta:2004rb,MullerStach:2011ru,Adams:2013kgc,Bloch:2013tra,Adams:2014vja,Adams:2015gva,Adams:2015ydq,Sogaard:2014jla,Bloch:2016izu,Remiddi:2016gno,Adams:2016xah,Bonciani:2016qxi,vonManteuffel:2017hms,Adams:2017ejb,Bogner:2017vim,Ablinger:2017bjx,Remiddi:2017har,Bourjaily:2017bsb,Hidding:2017jkk,Broedel:2017kkb,Broedel:2017siw,Broedel:2018iwv,Adams:2018yfj,Adams:2018bsn,Adams:2018kez}. 
As an example, let us consider the following alphabet, which appears in perturbative corrections for Higgs production in \cite{Dulat:talk2018, Anastasiou:2015yha}:
\begin{equation*}
    {\mathcal A}=
 \Big\{
 \sqrt{X},
 \sqrt{1+4X},
 \sqrt{X (X-4)}\Big\}.
\end{equation*}

These three square roots cannot be rationalized by a single rational variable change.
To see this, define $f_1:=X, f_2:=1+4X, f_3:=X(X-4)$,
take $J:=\{2,3\}$, and consider the square root
\begin{equation}\label{eq:higgsRoot}
\sqrt{\prod_{j\in J}f_j}=\sqrt{(1+4X)X(X-4)}.
\end{equation}
Note that the product $(1+4X)X(X-4)$ is a squarefree polynomial of degree $3>2$.
Therefore, by Corollary~\ref{c:DegreeCriterion}, the square root \eqref{eq:higgsRoot} is not rationalizable.
Thus, by Proposition~\ref{p:MoreRoots}, we conclude that $\mathcal{A}$ is not  rationalizable.
\end{example}

\begin{example}\label{e:dijetproduction}
Let us now consider the following set of square roots that is relevant for perturbative corrections to di-photon and di-jet hadro-production in \cite{Becchetti:2017abb}:

\begin{align*}
    \mathcal{A}=&\Big\{\; \sqrt{X+1},\; \sqrt{X-1}, \; \sqrt{Y+1},\\  & \sqrt{X+Y+1},\; \sqrt{16X+(4+Y)^2}\; \Big\}.
\end{align*}
Write $f_1,..., f_5$ for the polynomial arguments of the square roots in $\mathcal{A}$.
To show that $\mathcal{A}$ is not  rationalizable, consider the whole set of indices $J=\{1, 2, 3, 4, 5\}$
and define
$$
f:=\prod_{j\in J} f_j=(X+1)(X-1)(Y+1)(X+Y+1)(16X+(4+Y)^2).
$$
Let $\overline{S}$ be the associated double cover of $\sqrt{f}$ (cf. Definition~\ref{def:AssDoubleCover}). 
It is easy to check---for example by using our {\tt Magma} function---that $\overline{S}$ has only rational simple singularities.
Since $f$ has degree $6$,
Theorem~\ref{thm:DegreeCriterion} tells us that $\sqrt{f}$ is not rationalizable.
Hence, 
using Proposition~\ref{p:MoreRoots}, 
we can conclude that $\mathcal{A}$ cannot be  rationalizable.
\end{example}

\begin{example}
It is important (and fair) to stress that the results presented in this paper are not always enough to get an answer.
Consider the alphabet
\begin{align*}
    \mathcal{A}=\Big\{ &\sqrt{X_1(X_1-4X_3)}, \sqrt{-X_1X_2(4X_3(X_3+X_2)-X_1X_2)},\\
    &\sqrt{X_1(X_2^2(X_1-4X_3)+X_3X_1(X_3-2X_2))}\,  \Big\}
\end{align*}
 relevant for two-loop EW-QCD corrections to Drell--Yan scattering (cf.~\cite{Heller:2019gkq,Besier:2019hqd,Bonciani:2016ypc}).
Denote by $F_1, F_2, F_3$ the polynomial arguments of the square roots in $\mathcal{A}$.
Proving non-rationalizability of~$\mathcal{A}$ requires more than just the techniques presented in this paper.
Notice that $F_1, F_2, F_3$ are all homogeneous.
Therefore, we can view them as the homogenizations of three polynomials with respect to one of the three variables, for example $X_3$.
Studying the rationalizability of $\mathcal{A}$ is, hence, equivalent to studying the rationalizability of  
$$
\Big\{\; \sqrt{f_1}, \sqrt{f_2}, \sqrt{f_3}\; \Big\},
$$
where $f_i=f_i(X,Y)$ is the dehomogenization of $F_i$ with respect to $X_3$, that is,
$$
f_i(X,Y):=F_i(X,Y,1).
$$

As $f_1, f_2$ and $f_3$ have degree $2, 4$ and $4$ respectively, and since their associated double covers have at most rational simple singularities, one has that $\sqrt{f_1},\sqrt{f_2},\sqrt{f_3}$ are rationalizable when considered individually (cf. Theorem~\ref{thm:DegreeCriterion}).
The products $f_1f_2$ and $f_1f_3$, after removing the square factors, also have degree $4$ and associated double covers with at most rational simple singularities.
Hence, their square roots are rationalizable.
The product $f_2f_3$ has,
after removing square factors, degree $6$ but its associated double cover has (at least) two non-simple singularities, so we cannot conclude that $\sqrt{f_2f_3}$ is not rationalizable.
(After further investigation it turns out that it is in fact rationalizable.)
We are left with the square root of the product $f_1f_2f_3$.
After removing the squares, 
the product has degree $8$ but the associated double cover has some non-simple singularities as well.
So again, we cannot use Theorem~\ref{thm:DegreeCriterion} to conclude that its square root is non-rationalizable.

Analogous computations and results are obtained if one dehomogenizes the polynomials $F_1,F_2,F_3$ with respect to $X_1$ or $X_2$.
For this reason, using the results and techniques of the previous subsections, we cannot prove or disprove the rationalizability of the alphabet $\mathcal{A}$.

Only by using methods that are beyond the scope of this paper, one can see that
the square root $\sqrt{f_1f_2f_3}$ is not rationalizable and hence that the alphabet $\mathcal{A}$ is not rationalizable, where the $f_i$ denote the above mentioned dehomogenizations with respect to $X_3$.

\end{example}

\begin{remark}\label{r:StartingPoint}
Finally, let us stress that, 
when trying to prove non-rationalizability in physics computations,
it is crucial to pick the right starting point for the proof.
To clarify this important subtlety, consider the following alphabet:
\begin{equation*}\label{eq:initsetstartingpoint}
    \mathcal{A}:=\left\{\sqrt{X-1},\sqrt{X-2}\right\}.
\end{equation*}
To rationalize this set, we could proceed as follows: 
\begin{enumerate}
    \item try to rationalize the first square root;
    \item if successful, plug the corresponding substitution into the second square root and try to rationalize the resulting square root;
    \item if successful, compose both substitutions to obtain a single substitution that will rationalize both square roots.
\end{enumerate}
(A more detailed discussion of this procedure can be found in \cite{Besier:2019kco, Besier:2018jen}.)
We start out with the rationalization of the first square root, via the homomorphism $\phi\colon k(X)\to k(X)$ defined by $\phi\colon X\mapsto X^4+1$, hence $\sqrt{\phi(X-1)}=X^2$.
Using $\phi$, the second square root becomes
\begin{equation}\label{eq:nonratstartingpoint}
    \sqrt{X^4-1}\, ,
\end{equation}
giving us a non-rationalizable square root, cf. Theorem~\ref{thm:CriterionForCurves}.

Therefore, one might be tempted to assume that the non-rationalizability of \eqref{eq:nonratstartingpoint} implies non-rationalizability of $\mathcal{A}$.
This assumption is, however, not true:
consider the homomorphism $\psi\colon k(X)\to k(X)$ defined as $\psi\colon X\mapsto X^2+1$.
Then one can easily see that $\psi (X-1)=X^2$ and $\psi(X-2)=X^2-1$.
Notice that $X^2-1$  has degree $2$ and so one can easily rationalize its square root (cf. Corollary~\ref{c:SingProj}).
A suitable substitution is, for example, given by the homomorphism
$\sigma\colon k(X)\to k(X)$ defined by
$\sigma\colon X\mapsto \frac{2X^2}{1-X^2}+1$.
Finally, the composition 
$$
\iota:=\sigma\circ\psi\colon k(X)\to k(X), \; X\mapsto
\frac{2(X^2+1)^2}{1-(X^2+1)^2}+1
$$ 
rationalizes both square roots simultaneously, proving that $\mathcal{A}$ is rationalizable. 

Besides illustrating a way to prove rationalizability for an alphabet, this example gives us the following important insight:
proving non-rationalizability after some substitutions have already been made does, in general, not imply non-rationalizability of the original alphabet.
For Feynman integral computations, this means that one should always prove non-rationalizability as early as possible, i.e., as soon as the square roots arise in the computation.
\end{remark}

\bibliographystyle{elsarticle-harv}

\end{document}